\newtheorem{theo}{Theorem}
\newtheorem{lem}{Lemma}
\newtheorem{algo}{Algorithm}
\newcommand{\eps}{\varepsilon}
\newcommand{\R}{\mathbb{R}}
\newcommand{\T}{\mathbb{T}^1}
\renewcommand{\eps}{\varepsilon}
\renewcommand{\o}{\overline}
\renewcommand{\d}{\delta}
\newcommand{\rd}{\mathrm{d}}
\newcommand{\x}{\mathbf{x}}
\newcommand{\V}{\mathbf{V}}
\newcommand{\D}{\Delta}
\numberwithin{equation}{section}
\begin{document}
\title[]{Numerical investigation of the Free Boundary regularity for a degenerate advection-diffusion problem}
\author{
L. Monsaingeon 
}
\address{CAMGSD Instituto Superior T\'ecnico, Av. Rovisco Pais
1049-001 Lisboa}
\email{leonard.monsaingeon@tecnico.ulisboa.pt}
\begin{abstract}
We study the free boundary regularity of the traveling wave solutions to a degenerate advection-diffusion problem of Porous Medium type, whose existence was proved in \cite{MonsaingonNovikovRoquejoffre}.
We set up a finite difference scheme allowing to compute approximate solutions and capture the free boundaries, and we carry out a numerical investigation of their regularity.
Based on some nondegeneracy assumptions supported by solid numerical evidence, we prove the Lipschitz regularity of the free boundaries.
Our simulations indicate that this regularity is optimal, and the free boundaries seem to develop Lipschitz corners at least for some values of the nonlinear diffusion exponent.
We discuss analytically the existence of corners in the framework of viscosity solutions to certain periodic Hamilton-Jacobi equations, whose validity is again supported by numerical evidence.
\end{abstract}
\subjclass{35R35, 35K65, 35C07, 35D40}
% Please provide minimum  5 keywords.
 \keywords{degenerate diffusion, traveling waves, free boundaries, Hamilton-Jacobi equations, numerical investigation}
\maketitle
%
%
%%%%%%%%%%%%%%%%%%%%%%%%%%%%%%%%%%%%%%%%%%%%%%%%%%%%%%%%%%%%%%%%%%%%%%%%%%%%%%%%%%%%%%%%%%%%%%%%%%%%%%%%%%%%%%
%
\section{Introduction}
Consider the standard advection-diffusion equation
$$
\partial_t T-\mathrm{div}(\lambda \nabla T)+\mathbf V\cdot\nabla T=0,
\qquad t\geq 0,\,\x\in\R^d,
$$
with unknown temperature $T(t,\x)\geq 0$, conductivity $\lambda\geq 0$, and prescribed flow $\V(\x)\in\R^d$.
In the context of high temperature hydrodynamics the conductivity cannot be assumed to be constant as for standard diffusion, but should rather be of the form
$$
\lambda=\lambda(T)=\lambda_0 T^{m},\qquad
m>0
$$
for some fixed conductivity exponent and constant $\lambda_0>0$ depending on the model \cite{ZeldRaizer-physics}.
For example in physics of plasmas, and particularly in the context of Inertial Confinement Fusion, the dominant mechanism of heat transfer is the so-called electronic Spitzer heat diffusion and corresponds to $m=5/2$ (see e.g. \cite{AlmarchaClavinDucheminSanz-ablativeRT,ClavinMasse-instability}).
Suitably rescaling time and space yields the nonlinear parabolic problem
\begin{equation}
\partial_t T -\Delta\left(T^{m+1}\right) + \V\cdot \nabla T=0.
\label{eq:PMED}
\end{equation}
When $\V\equiv 0$ this corresponds to the celebrated the Porous Medium Equation
\begin{equation}
 \partial_tT-\D\left(T^{m+1}\right)=0,
\tag{PME}
\label{eq:PME_temp_parabolic}
\end{equation}
which has been widely studied in the literature as the basic example of a degenerate diffusion equation supporting finite speed of propagation.
We refer the reader to the classical monograph \cite{Vazq-PME} and references therein for an exhaustive bibliography, and to \cite{AronsonBenilan-regularite,AronsonCaff-initialtrace,BenilanPierre-solutionsPMERn} for well-posedness of the Cauchy problem and regularity questions.
Writing $\D T^{m+1}=\operatorname{div}((m+1)T^m\nabla T)$, it is clear that the diffusion degenerates at the levelset $\{T=0\}$ whenever $m>0$. 
In this setting it is well known \cite{DK07,Vazq-PME} that free boundaries $\Gamma=\partial\{T>0\}$
separate the ``hot'' region $D_+=\{T>0\}$ from the ``cold'' one $\{T=0\}$, and propagate with finite speed.
In order to study the propagation it is more convenient to use the pressure variable
\begin{equation*}
p=\frac{m+1}{m}T^{m},
\label{eq:defpressure}
\end{equation*}
which is well defined for physical temperatures $T\geq 0$ and formally satisfies
\begin{equation}
 \partial_tp-mp\D p+\V\cdot \nabla p=|\nabla p|^2.
\label{eq:PME_pressure_parabolic}
\end{equation}
Since $m>0$ and $p\propto T^m$, the degeneracy corresponds now to a vanishing ``coefficient'' $p=0$ along $\Gamma$ in the diffusion term.
As for most free boundary scenarios we cannot expect classical $\mathcal C^2$ solutions to exist, since gradient discontinuities may and usually do occur across the free boundaries.
A main difficulty is therefore to develop a suitable notion of viscosity and/or weak solutions, see e.g. \cite{CaffVazq-viscPME,CrandallIshiiLions-userguide,Vazq-PME} and references therein and thereof.
The parametrization, time evolution and regularity of the free boundary for \eqref{eq:PME_temp_parabolic} have been studied in details in \cite{CaffFried-regularity,CaffarelliVazquezWolanski-lipschitzPME,CaffWol-C1alpha}, and turns out to be a difficult question.
The case of potential flows $\V=\nabla \Phi$ has been studied in \cite{Kim}, where the authors investigate the long-time asymptotics of the free boundary for compactly supported solutions with an external confining potential ($\Phi(x)\propto |x|^2$ at infinity).
\\

In this article we focus on incompressible shear flows in the two dimensional periodic setting $\x=(x,y)\in D=\R\times \T$, and we consider
$$
\V(x,y)=\left(\begin{array}{c}
		\alpha(y)\\
		0
             \end{array}
\right),
\qquad \alpha(y+1)=\alpha(y),
\qquad \int_0^1\alpha(y)\mathrm{d}y=0
$$
for a sufficiently smooth $\alpha$ (the last condition is just a normalization).
Expressed in terms of the pressure variable, \eqref{eq:PMED} reads now in the cylinder
\begin{equation}
  \partial_tp-mp\D p+\alpha(y)\partial_xp=|\nabla p|^2,\qquad (t,x,y)\in\R^+\times\R\times\T.
\label{eq:PME-S_pressure_parabolic}
\end{equation}
Looking for traveling wave solutions $p(t,x,y)=p(x+ct,y)$ with speed $c>0$ yields the stationary PDE for the wave profiles
\begin{equation}
  -mp\D p+(c+\alpha(y))\partial_x p=|\nabla p|^2,
  \qquad (x,y)\in \R\times\T,
\label{eq:PME-S_pressure_stationary}
\end{equation}
and the following existence result was proved in \cite{MonsaingonNovikovRoquejoffre}:
\begin{theo}
Let $c_*:=-\min \alpha>0$.
For any $c> c_*$ there exists a continuous (very weak and viscosity) solution $p(x,y)\geq 0$ of \eqref{eq:PME-S_pressure_stationary} in the infinite cylinder $D=\R\times \mathbb T^{d-1}$ satisfying
\begin{enumerate}[(i)]
\item $p\in\mathcal{C}^{\infty}(D_+)$ and $0<\partial_x p\leq c_1$ in $D_+:=\{p>0\}$
\item $p$ is globally Lipschitz in $D$
\item $p$ is planar and linear at infinity in the direction of propagation $p(x,y)\sim cx$, $\partial_x p(x,y)\sim c>0$, and $p_y(x,y)\rightarrow 0$ uniformly in $y$ when $x\rightarrow +\infty$
\item
$p(x,y)\equiv 0$ for $x$ sufficiently negative
\end{enumerate}
The free boundary $\Gamma=\partial\{p>0\}\neq\emptyset$ can be parametrized as follows: there exists a bounded, periodic upper semi-continuous interface function $\mathcal I(y)$ such that
$$
p(x,y)>0\Leftrightarrow x>\mathcal I(y).
$$
Furthermore,
\begin{enumerate}[(a)]
\item
$\mathcal I$ is characterized by
\begin{equation}
 \mathcal I(y)=\inf\left(x\in\R,\quad p(x,y)>0\right)
\label{eq:char_I(y)}
\end{equation}
 \item 
If $y_0$ is a continuity point of $I$ then $\Gamma\cap\{y=y_0\}=(\mathcal I(y_0),y_0)\times\{y_0\}$. 
\item
If $y_0$ is a discontinuity point then $\Gamma\cap\{y=y_0\}=[\underline{\mathcal I}(y_0),\mathcal I(y_0)]\times\{y_0\}$, where $\underline{I}(y_0):=\displaystyle{\liminf_{y\rightarrow y_0}}\, \mathcal I(y)$.
\end{enumerate}
\label{theo:main}
\end{theo}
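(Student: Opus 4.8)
\noindent\textbf{Step 1 (non-degenerate problems on finite cylinders).} For $\eps>0$ and $L>0$ I would solve, on $\Sigma_L:=(-L,L)\times\T$, the equation $-m(p+\eps)\,\D p+(c+\a(y))\,\partial_x p=|\nabla p|^2$, periodic in $y$, with $p=\eps$ on $\{x=-L\}$ and a planar datum on $\{x=L\}$ (say $\partial_x p=c$, or $p=c(x-\ell)$ with $\ell$ a translation parameter fixed later). The coefficient $m(p+\eps)\ge m\eps>0$ is non-degenerate, so — once the barriers of Step~2 confine $p$ to a bounded interval — this is a genuinely quasilinear elliptic problem, which I would solve by Leray--Schauder degree theory, or by a monotone iteration between an ordered pair of sub/supersolutions chosen $x$-monotone (so that $\partial_x p_{\eps,L}\ge0$ is automatic). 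This produces a smooth $p_{\eps,L}>0$.

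\noindent\textbf{Step 2 (uniform a priori estimates and passage to the limit).} Uniformly in $\eps,L$ one needs: (1) \emph{strict monotonicity} $\partial_x p_{\eps,L}>0$, either from the $x$-monotone iteration or by the sliding method (the equation is translation invariant in $x$), the strong maximum principle upgrading $\ge0$ to $>0$ where $p>0$; (2) \emph{trapping}: affine functions $x\mapsto\gamma(x-a)$ are super-/sub-solutions according to whether $\gamma\le c-c_*$ or $\gamma\ge c+\max\a$, which, together with their $x$-translates (used with a translation normalization to place the free boundary at finite distance, giving (iv)) and refined asymptotically planar barriers near $x=+\infty$, confine $p_{\eps,L}$ and give $0<\partial_x p\le c_1$ on $\{p>0\}$; (3) \emph{Lipschitz}: a second-order estimate of Aronson--B\'enilan type (a lower bound for $\D p$) and a Bernstein-type gradient estimate yield $\|\nabla p_{\eps,L}\|_{L^\infty}\le C$ uniformly. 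The uniform Lipschitz bound and Arzel\`a--Ascoli (with a diagonal extraction over an exhaustion of $D$) give $p_{\eps,L}\to p$ locally uniformly along a subsequence as $\eps\to0$, $L\to\infty$; interior Schauder estimates on $\{p>0\}$ yield $p\in\mathcal C^\infty(D_+)$ and convergence of the equation there, while stability of viscosity solutions handles $\{p=0\}$. Hence $p$ is a viscosity (and, by the Lipschitz bound, very weak) solution of \eqref{eq:PME-S_pressure_stationary} inheriting properties (i)--(iv), and $\Gamma\ne\emptyset$ since the left barrier forbids $p>0$ everywhere while (iii) gives $\{p>0\}\ne\emptyset$.

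\noindent\textbf{Step 3 (the interface function).} Fix $y$. If $p(x_1,y)>0$ then, since $\partial_x p>0$ on $D_+$, $p(\cdot,y)$ cannot return to $0$ for $x>x_1$; hence $\{x:p(x,y)>0\}$ is a half-line $(\mathcal I(y),+\infty)$ with $\mathcal I(y):=\inf\{x:p(x,y)>0\}$, which is (a). It is finite by (iii)/(iv), periodic by periodicity of the construction, and upper semicontinuous because $\{\mathcal I<\l\}=\{y:\exists\,x<\l,\ p(x,y)>0\}$ is open by continuity of $p$. At a continuity point $y_0$: points $(x_0,y_0)$ with $x_0>\mathcal I(y_0)$ are interior to $D_+$, and those with $x_0<\mathcal I(y_0)$ have $p\equiv0$ nearby, so $\Gamma\cap\{y=y_0\}=\{(\mathcal I(y_0),y_0)\}$, which is (b). At a discontinuity point, upper semicontinuity forces $\underline{\mathcal I}(y_0):=\liminf_{y\to y_0}\mathcal I(y)<\mathcal I(y_0)$; picking $y_n\to y_0$ with $\mathcal I(y_n)\to\underline{\mathcal I}(y_0)$ shows every $(x_0,y_0)$ with $\underline{\mathcal I}(y_0)\le x_0\le\mathcal I(y_0)$ is a limit of $D_+$ points where $p$ vanishes (at $x_0=\mathcal I(y_0)$ use $p(\mathcal I(y_0),y_0)=0$ by continuity), while points outside the segment are interior to $D_+$ or to its complement; this is (c).

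\noindent\textbf{Main obstacle.} Everything rests on Step~2, and the truly delicate points are the strict monotonicity $\partial_x p>0$ for a \emph{degenerate quasilinear} equation and, above all, the exact behavior at $x=\pm\infty$ \emph{uniformly} in the approximation. The two ends pull against each other: a supersolution of affine type has slope $\le c-c_*<c$ and a subsolution slope $\ge c+\max\a>c$, so such barriers are never ordered, and no exact planar solution $cx+\psi(y)$ exists. One therefore needs either asymptotically planar barriers carrying a small (logarithmic) correction, or — exploiting $\partial_x p>0$ — a partial hodograph transform $x=X(p,y)$ together with the improved structure of the equation for $X$ as $p\downarrow0$, exactly where the degeneracy is worst. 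Controlling $X$ up to the free boundary and checking that this control survives the limit is where the real work lies.
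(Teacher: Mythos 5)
This theorem is not proved in the paper you were given: it is imported verbatim from \cite{MonsaingonNovikovRoquejoffre}, and the only indication of its proof here is the remark that the solutions ``were precisely obtained by solving the problem in truncated cylinders with suitable boundary conditions and letting the length of the cylinders tend to infinity.'' Your skeleton --- regularize the degenerate coefficient, solve on finite cylinders with a planar/slope datum at the right end, obtain uniform estimates (monotonicity by sliding, affine barriers, Lipschitz bounds of Aronson--B\'enilan/Bernstein type), then pass to the limit and read off (i)--(iv) --- is exactly that strategy, so in outline you are on the same route as the original construction. Your Step~3 is the one part that is genuinely complete: deriving the half-line structure of $\{p(\cdot,y)>0\}$ from $\partial_x p>0$ in $D_+$, upper semicontinuity of $\mathcal I$ from openness of $\{\mathcal I<\lambda\}$, and the case analysis (b)--(c) at continuity/discontinuity points are correct consequences of (i)--(iv) and need no further input.

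That said, as a proof the proposal stops where the theorem actually lives, and you say so yourself. Step~2 asserts, rather than establishes, the three uniform-in-$(\eps,L)$ ingredients: strict monotonicity $\partial_x p>0$ up to the degenerate set, the global Lipschitz bound, and above all the planar behavior $p\sim cx$, $\partial_x p\to c$ at $+\infty$, which is the speed-selection mechanism and cannot be obtained from the affine barriers you quote (their slopes $c-c_*$ and $c+\max\alpha$ straddle $c$ and are never ordered, as you note); one needs genuinely asymptotically planar super/subsolutions with corrections, or a finer argument near $x=+\infty$, carried out uniformly in the approximation so that the limit does not lose the normalization (iv) or degenerate to $p\equiv 0$. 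The present paper itself underlines how delicate this region near the free boundary is: even granting the full construction, the finer statements \eqref{hyp:H1}--\eqref{hyp:H2} could not be proved analytically and are only supported numerically here. So your proposal is a correct and well-aimed program matching the cited construction, but Step~2 is a program, not yet a proof.
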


Let us stress that Theorem \ref{theo:main} was proved in \cite{MonsaingonNovikovRoquejoffre} in arbitrary dimensions $(x,y)\in\R\times\mathbb{T}^{d-1}$ ($d\geq 2$).
These solutions are the exact equivalent of the planar wave solutions
$$
p_c(x,y)=c[x]^+
$$
for the pure PME \eqref{eq:PME_pressure_parabolic} with $\V\equiv 0$, written here in the steady wave frame $x+ct$ and defined up to shifts ($[.]^+=\max\{.,0\}$ stands for the positive part).
Note that $c>0$ means propagation in the $x<0$ direction, and that any speed $c>0$ is admissible for the PME while $c>c_*$ in Theorem~\ref{theo:main}.
For these PME planar waves the interfaces in the moving frame are stationary $\mathcal C^{\infty}$ flat hypersurfaces $\{x=0\}$, up to shifts in the $x$ direction.
For general solutions to the PME, the interfaces tend to become or remain $C^{1,\alpha}$ depending on the initial regularity, see \cite{CaffFried-regularity,CaffarelliVazquezWolanski-lipschitzPME,CaffWol-C1alpha,daskalopoulos2003free}.

Note however that, when $\alpha(y)\not\equiv 0$, the parametrization function $\mathcal I(y)$ in Theorem~\ref{theo:main} is only upper semi-continuous.
At this stage the free-boundaries may have cusps, and we cannot exclude discontinuities of $\mathcal I(.)$ corresponding to horizontal segments $[\underline{\mathcal I}(y_0),\mathcal I(y_0)]\times\{y_0\}\subset \Gamma$ as in Theorem~\ref{theo:main}(c).
Due to the presence of the advection term, the by-now classical monotonicity techniques from \cite{alt1984variational,caffarelli2005geometric} do not immediately apply, and the purpose of this article is to investigate numerically (in dimension $d=2$) the free boundary regularity missing so far in Theorem~\ref{theo:main}.
\\

Using a finite difference scheme, we construct approximate solutions in truncated cylinders with suitable boundary conditions and track the free boundaries.
The simulations indicate that the solutions tend to grow linearly across the interface in the direction of propagation $x$, which is a strong nondegenerate behaviour and will be crucial in the subsequent analysis.
\emph{Assuming} this nondegeneracy, and under an additional regularity hypothesis, we will prove rigorously that the interface parametrization $\mathcal I(y)$ is Lipschitz continuous, solves a certain periodic Hamilton-Jacobi equation of the form
\begin{equation}
\label{eq:HJ_abstract}
|\nabla _y \mathcal I|^2=g(y),\qquad y\in \mathbb T^{d-1},
\end{equation}
and the free boundary is therefore the Lipschitz graph $\Gamma=\{x=\mathcal I(y)\}$.
Both assumptions will be supported by strong numerical evidence.
From the theory of periodic Hamilton-Jacobi equations we expect this Lipschitz regularity to be optimal, and this will be confirmed by our numerical experiments: depending on the value of the diffusion exponent, the interfaces appear to be regular when $m>1$, but seem to systematically develop Lipschitz corners for $m\in (0,1)$.
The value $m=1$ appears to be a sharp transition, at least in dimension $d=2$, and the very existence of corners is somewhat surprising: since our traveling waves are entire solutions to the parabolic problem \eqref{eq:PMED} one could expect smoothing as $t\to\infty$ and at least to some extent, even though the diffusion is degenerate.
This is in sharp contrast with the pure PME: in the original stationary $x$ frame the planar wave solutions $p_c(t,x,y)=c[x+ct]^+$ have of course smooth free boundaries, and for compactly supported initial data it is known \cite{daskalopoulos2001all,koch1998non,Vazq-PME} that the free boundaries become (spherical) $\mathcal C^\infty$ hypersurfaces $|x|\approx Ct^\beta$ asymptotically as $t\to\infty$ for some scaling exponent $\beta\equiv\beta(m,d)>0$.
In order to base here the existence of corners on more solid grounds than the mere observation of corner-\emph{looking} points in 2D plots obtained from simulations, we shall use in this paper the framework of Hamilton-Jacobi equations.
Indeed the classical theory for viscosity solutions \cite{Barles,CrandallIshiiLions-userguide,JensenSouganidis-regularity,Roquejoffre-PropQualHJ} tells us that the regularity of $\mathcal I(y)$ in \eqref{eq:HJ_abstract} is essentially related to the number of zeros of the right-hand side $g(y)$ in the torus.
This more tractable condition can be checked numerically, and will confirm the existence of corners in a more analytical context.

The paper is organized as follows: in section \ref{section:numerical_scheme} we introduce the discrete scheme, prove its positivity (Lemma~\ref{lem:positivity}), and highlight the numerical convergence in slowly drifting frames.
Section~\ref{section:nondegeneracy} contains the investigation of the nondegeneracy, based on numerical evidence.
Our main regularity result (Theorem~\ref{theo:regularity}) is proved in section \ref{section:regularity_corners}, which also contains the numerical validations as well as the interpretation in the framework of Hamilton-Jacobi equations.
%
%%%%%%%%%%%%%%%%%%%%%%%%%%%%%%%%%%%%%%%%%%%%%%%%%%%%%%%%%%%%%%%%%%%%%%%%%%%%%%%%%%%%%%%%%%%%%%%%%%%%%%%%%%%%%%
%
\section{Numerical scheme}
\label{section:numerical_scheme}
For the sake of simplicity we shall only consider the following three flows
$$
\begin{array}{ccc}
\alpha_1(y)  :=  0.5\sin(2\pi y),%\hspace{.2cm}
&
\alpha_2(y) :=  10\left(y^2(1-y)^2-\frac{1}{30}\right),%\hspace{.2cm}
&
\alpha_3(y) :=  \frac{1}{4}\sum\limits_{k=1}^4 \sin(2k\pi y)
\end{array}
$$
in our computations.
These are all normalized to be mean zero as required above, and $\alpha_3(y)$ is just a truncation of the Fourier expansion of a triangular sawtooth.

In order to approximate the wave profiles from Theorem~\ref{theo:main} we use a classical idea: traveling waves are usually attractors for the long-time dynamics of the associated Cauchy problems.
Fixing an admissible propagation speed $c>c_*$ as in Theorem~\ref{theo:main} (namely such that $c+\alpha(y)\geq c_0>0$) we work in the corresponding left-moving frame $x+ct$, in which \eqref{eq:PME-S_pressure_parabolic} reads
\begin{equation}
\partial_t p-mp\D p+ [c+\alpha(y)]\partial_x p=|\nabla p|^2.
\label{eq:PME-S_pressure_parabolic_waveframe}
\end{equation}
Starting with some suitable initial datum to be precised below, we expect a long-time convergence $p(t,x,y)\to \o p(x,y)$ of the Cauchy solution to the stationary wave profile satisfying \eqref{eq:PME-S_pressure_stationary}.
Since there exists a whole continuum of admissible speeds $c\in(c_*,+\infty)$, the speed selection by the long-time asymptotics is quite delicate. According to Theorem~\ref{theo:main}(iii) we know that the stationary wave profile satisfies $\partial_x\o p\sim c>0$ when $x\rightarrow +\infty$, and roughly speaking the slope at infinity determines the propagation speed.
This will be taken into account by imposing the Neumann boundary conditions $\partial_x p(t,.)=c$ ``at infinity'', or, rather, on the right boundary $x=X_{\max}$ of a large but finite computational cylinder $(x,y)\in [0,X_{\max}]\times\T$.
This is consistent with the construction in \cite{MonsaingonNovikovRoquejoffre}, where the solutions of Theorem~\ref{theo:main} were precisely obtained by solving the problem in truncated cylinders with suitable boundary conditions and letting the length of the cylinders tend to infinity.
%
%%%%%%%%%%%%%%%%%%%%%%%%%%%%%%%%%%%%%%%%%%%%%%%%%%%%%%%%%%%%%%%%%%%%%%%%%%%%%%%%%%%%%%%%%%
%
\subsection{Time and space discretization}
Choosing some large $X_{\max}>0$ and integers $N_x,N_y\in \mathbb N$, we work in the finite domain
$$
(x,y)\in D=[0,X_{\max}]\times\T
$$
and build a logically rectangular mesh $(i,j)\in \llbracket 1,N_x\rrbracket\times\llbracket 1,N_y\rrbracket$ as in Figure~\ref{fig:mesh}. 
Each cell is of size
$$
\rd x=\frac{X_{\max}}{N_x-1},\qquad \rd y=\frac{1}{N_y-1},
$$
and we denote the nodes by
$$
x_i=(i-1)\rd x,\qquad y_j=(j-1)\rd y,\qquad  (i,j)\in \llbracket 1,N_x\rrbracket\times\llbracket 1,N_y\rrbracket.
$$
\begin{figure}[!ht]
\begin{center}
\def\svgwidth{\columnwidth}
    \resizebox{0.8\textwidth}{!}{\input{./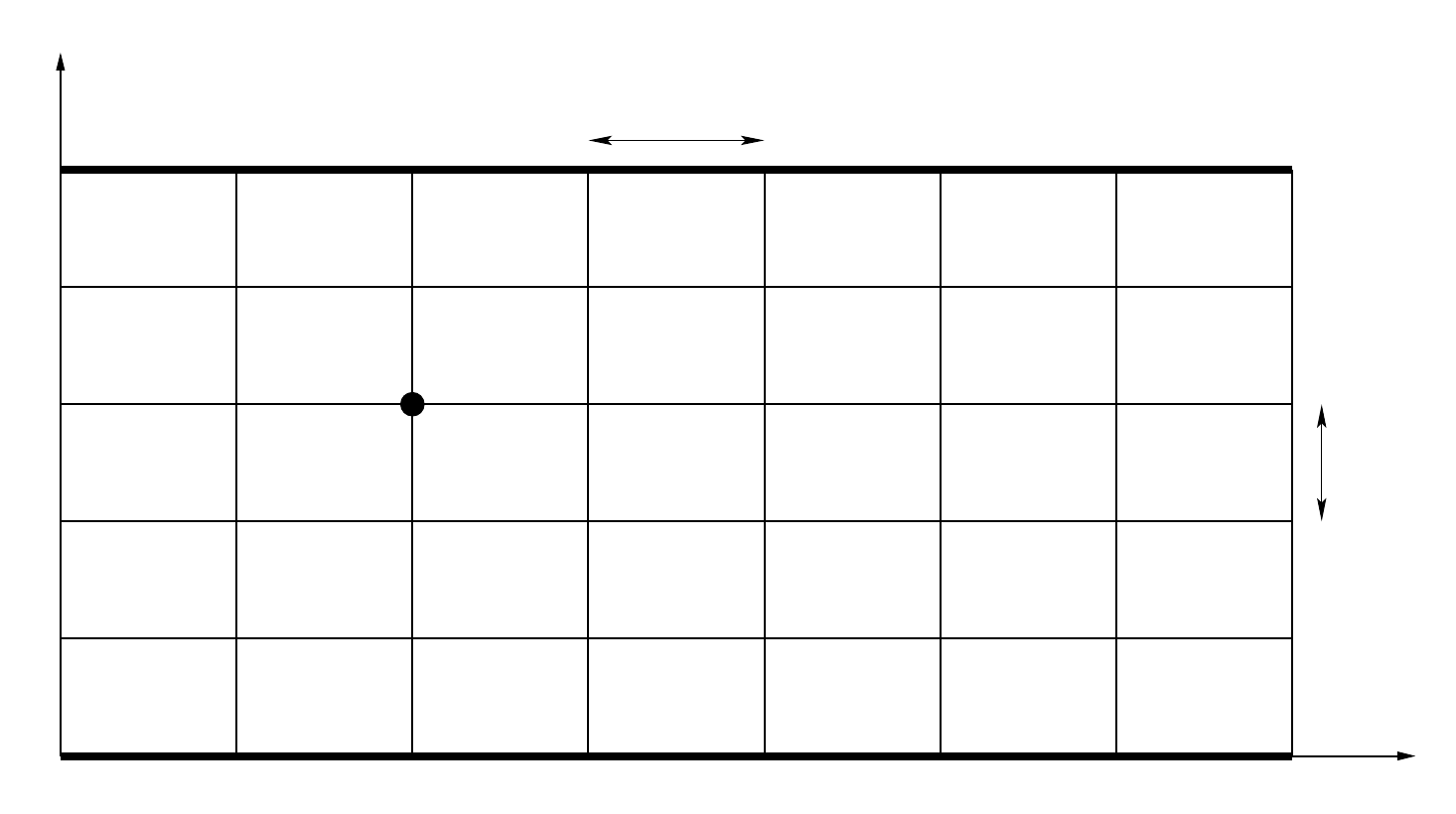_t}}
\end{center}
\caption{Logically rectangular mesh. The top and bottom boundaries are identified through $y$-periodicity.}
\label{fig:mesh}
\end{figure}
\\
Since the Cauchy problem \ref{eq:PME-S_pressure_parabolic} is of course time-dependent, we also choose a large maximal time $T^{\max}$ and time steps
$$
0=t^0<...<t^n<t^{n+1}<...<T^N=T^{\max}, \qquad \rd t^n=t^{n+1}-t^n.
$$
For each iteration the adaptative time step will be chosen  in order to satisfy some Courant-Friedrichs-Lewy stability condition to be precised shortly.
We write as usual
$$
P^n_{i,j}\approx p(t^n,x_i,y_j),
$$
and the $y$-periodicity $j\equiv j\;\textrm{mod}\;(N_y-1)$ is used to compute $\partial_y$ derivatives on the top and bottom boundaries $j=1\equiv N_y$. 
In order to obtain an explicit scheme we approximate the time derivative by the forward difference
$$
\partial_t p(t^n,x_i,y_j)  \approx  \Delta^+_t P^n_{i,j} :=  \cfrac{P^{n+1}_{i,j}-P^n_{i,j}}{\rd t^n}.
$$
For the diffusion term we use the centered differences
\begin{eqnarray*}
\D p(t^n,x_i,y_k) & = & \left[\partial^2_{xx}p+\partial^2_{xx}p\right](t^n,x_i,y_j)\approx  \Delta^2_{xx} P^n_{i,j}+\Delta^2_{yy} P^n_{i,j},\\
\Delta^2_{xx} P^n_{i,j} & := & \cfrac{P^n_{i+1,j}+P^n_{i-1,j}-2P^n_{i,j}}{2\rd x^2},\\
\Delta^2_{yy} P^n_{i,j} & := & \cfrac{P^n_{i,j+1}+P^n_{i,j-1}-2P^n_{i,j}}{2\rd y^2}.
\end{eqnarray*}
Since we always assume $c>c_*=-\min\alpha\Leftrightarrow c+\alpha(y)\geq c_0>0$ in Theorem~\ref{theo:main}, we naturally use an upwind approximation for the advection term
\begin{equation}
(c+\alpha)\partial_xp(t^n,x_i,y_j)  \approx  [c+\alpha(y_j)]\D^-_x P^n_{i,j},
\qquad 
\D^-_x P^n_{i,j}  :=  \cfrac{P^n_{i,j}-P^n_{i-1,j}}{\rd x}.
\label{eq:upwinding_advection}
\end{equation}
Finally, we use centered differences for the right-hand side
\begin{eqnarray*}
|\nabla p|^2(t^n,x_i,y_j) & \approx & \left(\Delta_x P^n_{i,j}\right)^2+\left(\Delta_x P^n_{i,j}\right)^2,\\
\Delta_x P^n_{i,j} & := & \cfrac{P^{n}_{i+1,j}-P^{n}_{i-1,j}}{2\rd x},\\
\Delta_y P^n_{i,j} & := & \cfrac{P^{n}_{i,j+1}-P^{n}_{i,j-1}}{2\rd y}.
\end{eqnarray*}
Replacing each term in \eqref{eq:PME-S_pressure_parabolic_waveframe} by their discrete counterpart leads to the scheme
\begin{multline}
P^{n+1}_{i,j}=P^n_{i,j}+\rd t^n\Big{[}mP^n_{i,j}\left(\Delta^2_{xx} P^n_{i,j}+\Delta^2_{yy} P^n_{i,j}\right)\\
-\left(c+\alpha(y_j)\right)\Delta^-_x P^n_{i,j}+\left(\left(\Delta_x P^n_{i,j}\right)^2+\left(\Delta_y P^n_{i,j}\right)^2\right)\Big{]}.
\label{eq:def_scheme}
\tag{S}
\end{multline}
With this choice of discretization we expect first order accuracy in time and space, but we shall not investigate convergence orders in this work because no explicit solution is known so far.

From Theorem~\ref{theo:main}(i)-(iii) we know that the stationary solution should, at least qualitatively, resemble the classical PME planar wave $p_c(x,y)=c\left[x\right]^+$ up to translation in the $x$ direction.
We naturally use this profile as an initial condition
$$
p^0(x,y)=c\left[x-\tau\right]^+\quad \leadsto \quad P^0_{i,j}=c\left[x_i-\tau\right]^+,
$$
where the shift parameter $\tau\in(0,X_{\max})$ is chosen so that the initial free-boundary $\Gamma_0=\{x=\tau\}$ is well within the computational domain (say $\tau=X_{\max}/2$).

Since we are working in finite cylinders we also need to prescribe suitable boundary conditions on the sides $x\in \{0,X_{\max}\}$.
As stated in Theorem~\ref{theo:main}(iii) for the theoretical wave profile, the slope at infinity prescribes the propagation speed as $p(x,y)\sim cx$ and $\partial_x p\sim c$ when $x\to\infty$.
We consequently impose the Neumann condition on the right boundary $i=N_x$
\begin{equation}
\partial_xp(t,X_{\max},y)=c \qquad \leadsto \qquad  \frac{ P^n_{N_x,j}-P^n_{N_x-1,j}}{\rd x}=c, \quad j\in\llbracket1,N_y\rrbracket.
\label{eq:right_boundary_condition}
\end{equation}
As for the left boundary condition, let us recall that we chose an initial datum $p^0(x,y)=c[x-\tau]_+$ whose free boundary $\Gamma_0=\{x=\tau>0\}$ is away from $x=0$ at time $t=0$ (typically we use $\tau=X_{\max}/2$ with $X_{\max}$ large).
Because the free boundaries $\Gamma_t=\partial\{p(t,.)>0\}$ should propagate with finite speed \cite{Vazq-PME} and since the sought solution is stationary in the wave frame, we reasonably expect that $\Gamma_t$ should stay away from $x=0$ for later times $t>0$.
Thus the left boundary of the domain should therefore never ``see'' the solution $p(t,0,y)\equiv 0$, and we apply now the homogeneous Dirichlet condition
\begin{equation}
p(t,0,y)\equiv 0
\qquad\leadsto\qquad
P^n_{1,j}=0,\quad j\in\llbracket1,N_y\rrbracket.
\label{eq:left_boundary_condition}
\end{equation}
This leads to
\begin{algo}[Numerical solver for the Cauchy problem] Initialize $t^0=0$ and $P^0_{i,j}=c\left[x_i-\tau\right]^+$.
\label{algo:scheme}
\begin{enumerate}
\item
\label{item:update_tn}
For $n\geq 0$ choose
\begin{equation}
\rd t^n\leq \min\cfrac{1}{2(1/\rd x^2+1/\rd y^2)m\max\limits_{i,j}P^n_{i,j}+(c+\|\alpha\|_\infty)/\rd x}
\label{eq:def_dtn}
\end{equation}
and update $t^{n+1}:=t^n+\rd t^n$.
\item
Update $P^n\rightarrow P^{n+1}$ applying first \eqref{eq:def_scheme} in the interior $i\in\llbracket 2,N_x-1\rrbracket$, and then \eqref{eq:left_boundary_condition}\eqref{eq:right_boundary_condition} at the boundaries $i=1,N_x$.
\item
\label{item:algo_positivity}
If $P^{n+1}_{i,j}<0$, replace by $P^{n+1}_{i,j}:=0$.
\item
While $t^{n+1}<T^{\max}$, repeat from \ref{item:update_tn}.
\end{enumerate}
\end{algo}
\noindent
A typical computation is shown in figure \ref{fig:example_cauchy}: the pressure $p(t,x,y)$ evolves according to \eqref{eq:PME-S_pressure_parabolic} and the initially flat free boundary adjusts in time.

The first expression in the denominator of \eqref{eq:def_dtn} corresponds to the exact optimal CFL condition $\rd t\leq \rd t_{diff}=\cfrac{\rd x^2 \rd y^2}{2\lambda(\rd x^2 + \rd y^2)}=\mathcal O(\rd x^2+\rd y^2)$ for explicit schemes that one would get when considering the diffusion part $\partial_t p-mp\D p=0$ in \eqref{eq:PME-S_pressure_parabolic} as a linear diffusion equation $\partial_t p-D\Delta p=0$, with a diffusion coefficient $D=m p(t,x,y)$ and $\lambda = \|D\|_\infty$.
The second term in the denominator corresponds to the usual stability condition $\rd t\leq \rd t_{adv}=\rd x/\|\mathbf V\|_\infty$ for the advection part $\partial_t p =-(c+\alpha(y))\partial_x p$ in\eqref{eq:PME-S_pressure_parabolic_waveframe} with the upwinding \eqref{eq:upwinding_advection}.
In our simulations $P^{n}_{i,j}$ remains of order one at least on the right boundary, thus the diffusive (quadratic) CFL condition takes over the hyperbolic (linear) condition and in practice \eqref{eq:def_dtn} always selects $\rd t^n\approx \rd t_{CFL}^n=\mathcal O(\rd x^2+\rd y^2)$.
Moreover from our numerical experiments the quasilinear diffusion $-mp\D p$ seems to provide sufficient stability and \eqref{eq:def_dtn} seems optimal, in the sense that the scheme appears to be stable when this CFL condition is enforced, whereas instabilities started building up when $\rd t^n>\rd t_{CFL}$.
We do not pretend here to prove any rigorous stability/convergence results, but rather observe numerical stability from our computations.

It is worth stressing that \eqref{eq:PME-S_pressure_parabolic_waveframe} satisfies a comparison principle at the continuous level: nonnegative initial data should therefore produce nonnegative solutions, and step \ref{item:algo_positivity} accordingly prevents numerical errors from producing undesired negative values.
This truncation is actually never performed during the computations, and in fact the scheme is positive:
\begin{lem}
\label{lem:positivity}
Assume that $P_{i,j}^n\geq 0$ and that the time step satisfies \eqref{eq:def_dtn}.
Then $P_{i,j}^{n+1}\geq 0$ as well.
\end{lem}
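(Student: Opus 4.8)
The plan is to exhibit one step of \eqref{eq:def_scheme} as an update of $P^{n+1}_{i,j}$ with \emph{nonnegative} coefficients acting on the five stencil values $P^n_{i,j},P^n_{i\pm1,j},P^n_{i,j\pm1}$, plus a manifestly nonnegative remainder; nonnegativity then propagates node by node, exactly as in the classical discrete maximum principle for explicit monotone schemes. First I would discard the square-gradient term $(\Delta_x P^n_{i,j})^2+(\Delta_y P^n_{i,j})^2\ge 0$ from \eqref{eq:def_scheme}, since it only helps: it suffices to check that
\[
P^n_{i,j}+\rd t^n\Big[\,mP^n_{i,j}\big(\Delta^2_{xx}+\Delta^2_{yy}\big)P^n_{i,j}-(c+\alpha(y_j))\,\Delta^-_x P^n_{i,j}\,\Big]\ge 0 .
\]
Expanding the centered second differences and the upwind first difference and regrouping by stencil node, the coefficients of $P^n_{i+1,j}$, $P^n_{i-1,j}$, $P^n_{i,j+1}$, $P^n_{i,j-1}$ come out as
\[
\frac{\rd t^n mP^n_{i,j}}{2\rd x^2},\qquad
\frac{\rd t^n mP^n_{i,j}}{2\rd x^2}+\frac{\rd t^n(c+\alpha(y_j))}{\rd x},\qquad
\frac{\rd t^n mP^n_{i,j}}{2\rd y^2},\qquad
\frac{\rd t^n mP^n_{i,j}}{2\rd y^2},
\]
all nonnegative because $P^n_{i,j}\ge 0$ and $c+\alpha(y_j)\ge c_0>0$ (the $y$-periodicity is used here to interpret the $j\pm1$ neighbours when $j\in\{1,N_y\}$, and they are again among the nonnegative $P^n_{k,\ell}$). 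The coefficient of $P^n_{i,j}$ itself is $1-\rd t^n\big[mP^n_{i,j}(1/\rd x^2+1/\rd y^2)+(c+\alpha(y_j))/\rd x\big]$.

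The only point left is to check that this diagonal coefficient is nonnegative, and this is precisely where the time-step restriction \eqref{eq:def_dtn} enters: bounding $mP^n_{i,j}\le m\max_{k,\ell}P^n_{k,\ell}$ and $c+\alpha(y_j)\le c+\|\alpha\|_\infty$ gives
\[
\rd t^n\Big[mP^n_{i,j}\Big(\tfrac1{\rd x^2}+\tfrac1{\rd y^2}\Big)+\tfrac{c+\alpha(y_j)}{\rd x}\Big]
\le \rd t^n\Big[2m\max_{k,\ell}P^n_{k,\ell}\Big(\tfrac1{\rd x^2}+\tfrac1{\rd y^2}\Big)+\tfrac{c+\|\alpha\|_\infty}{\rd x}\Big]\le 1
\]
by \eqref{eq:def_dtn} (the factor $2$ there even leaves a margin on the diffusive part). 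Hence every coefficient in the regrouped expansion is $\ge 0$ and, $P^n$ being nonnegative on the whole stencil, $P^{n+1}_{i,j}\ge 0$ at every interior node $2\le i\le N_x-1$. Finally I would treat the two boundary columns updated afterwards in step (2) of Algorithm~\ref{algo:scheme}: \eqref{eq:left_boundary_condition} forces $P^{n+1}_{1,j}=0$, while \eqref{eq:right_boundary_condition} gives $P^{n+1}_{N_x,j}=P^{n+1}_{N_x-1,j}+c\,\rd x\ge 0$ since the interior value $P^{n+1}_{N_x-1,j}$ is already nonnegative and $c,\rd x>0$.

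There is no serious obstacle here: the argument is the textbook positivity proof for explicit monotone discretizations of degenerate parabolic equations, the degenerate factor $mp$ simply entering the stencil coefficients rather than spoiling their sign. The only mildly delicate parts are the bookkeeping of the regrouped coefficients and verifying that the CFL inequality \eqref{eq:def_dtn} closes the estimate with the correct orientation, which it does with room to spare; the nonnegative square-gradient term and the comparison-principle structure of \eqref{eq:PME-S_pressure_parabolic_waveframe} at the continuous level are what makes this automatic.
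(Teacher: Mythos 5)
Your proof is correct and follows essentially the same route as the paper: drop the nonnegative square-gradient term, rewrite the explicit update as a nonnegative linear combination of the stencil values (off-diagonal signs from the centered diffusion and upwinding, diagonal sign from the CFL bound \eqref{eq:def_dtn}), then treat the Dirichlet and Neumann columns separately. The only difference is bookkeeping: you use the paper's literal definitions of $\Delta^2_{xx},\Delta^2_{yy}$ (with the $2\rd x^2,2\rd y^2$ denominators), which makes your diagonal coefficient carry $(1/\rd x^2+1/\rd y^2)$ instead of the paper's $(2/\rd x^2+2/\rd y^2)$, so the CFL condition closes with extra margin, exactly as you note.
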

\begin{proof}
Using $|\Delta_x P^n_{i,j}|^2+|\Delta_y P^n_{i,j}|^2\geq 0$, the interior scheme \eqref{eq:def_scheme} gives
\begin{multline*}
 P^{n+1}_{i,j}\geq P_{i,j}^n+\rd t\left[mP^n_{i,j}\left(\Delta^2_{xx} P^n_{i,j}+\Delta^2_{yy} P^n_{i,j}\right)
-\left(c+\alpha(y_j)\right)\Delta^-_x P^n_{i,j}\right]\\
= \beta^n_{i,j}P^n_{i,j}+ \sum\limits_{|k-i|+|l-j|=1}\beta^n_{k,l}P^n_{k,l}
\end{multline*}
for $i\in \llbracket 2,N_x-1\rrbracket$ and coefficients $\beta^n_{i,j}$.
For the off-diagonal indexes in the last sum, the consistent discretization of the diffusion terms and the upwinding \eqref{eq:upwinding_advection} automatically guarantee that $\beta_{k,l}\geq 0$.
Moreover with our CFL condition \eqref{eq:def_dtn} the diagonal coefficient reads
$$
\beta^n_{i,j}=1-\rd t^n\left[\left(\frac{2}{\rd x^2}+\frac{2}{\rd y^2}\right)m P^n_{i,j}+\frac{c+\alpha(y_j)}{\rd x}\right]\geq 0.
$$
Thus $P^{n+1}_{i,j}\geq 0$ as a positive linear combination of the nonnegative $P^n_{k,l}$'s, at least in the interior $(i,j)\in \llbracket 2,N_x-1\rrbracket\times \llbracket 1,N_y\rrbracket$.
On the left boundary $i=1$ we set the Dirichlet condition $P^{n+1}_{1,j}=0$ so our statement trivially holds.
Finally on the right boundary the Neumann condition immediately gives $P^{n+1}_{N_x,j}=P^{n+1}_{N_x-1,j}+c\,\rd x\geq P^{n+1}_{N_x-1,j}\geq 0$ and the proof is achieved.
\end{proof}
Typical sufficient conditions for the convergence of such parabolic schemes are stability, consistency, and monotonicity \cite{BS91,CL96}.
The positivity Lemma~\ref{lem:positivity} goes of course in the right direction, but for the sake of simplicity we shall not look any further into the discrete properties of the scheme.
\par
We implemented Algorithm~\ref{algo:scheme} in \texttt{Fortran90}, and all the simulations presented here were carried out on dedicated servers at the Institut de Math\'ematiques de Toulouse, France.
\begin{figure}[h!]
\begin{center}
\begin{tabular}{cc}
\includegraphics[width=7cm]{./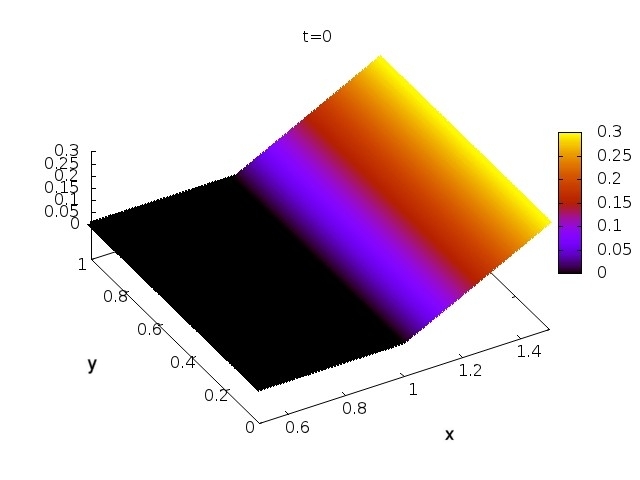} &
\includegraphics[width=7cm]{./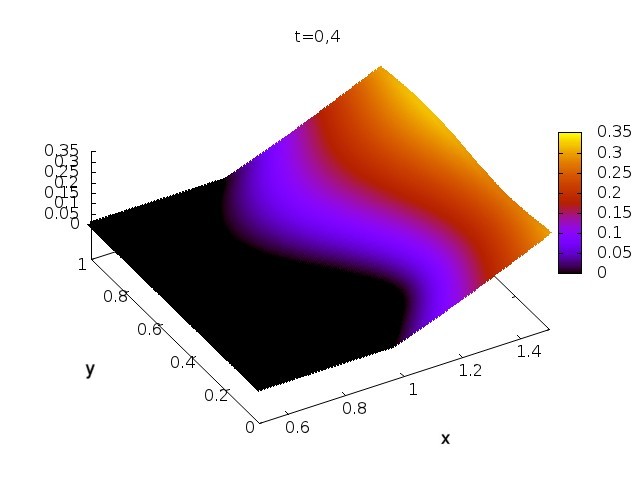}\\
\includegraphics[width=7cm]{./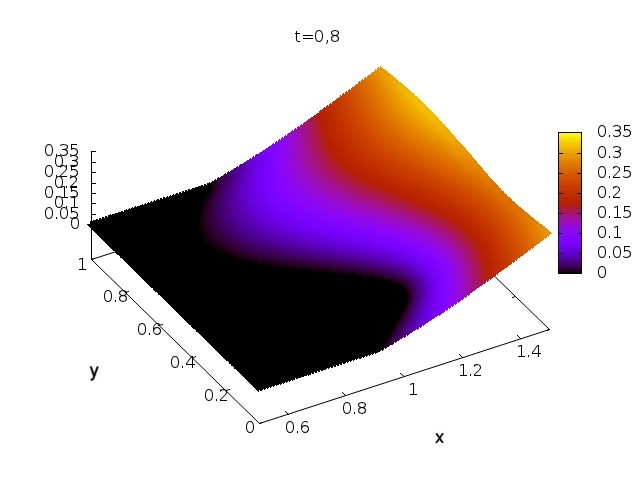} &
\includegraphics[width=7cm]{./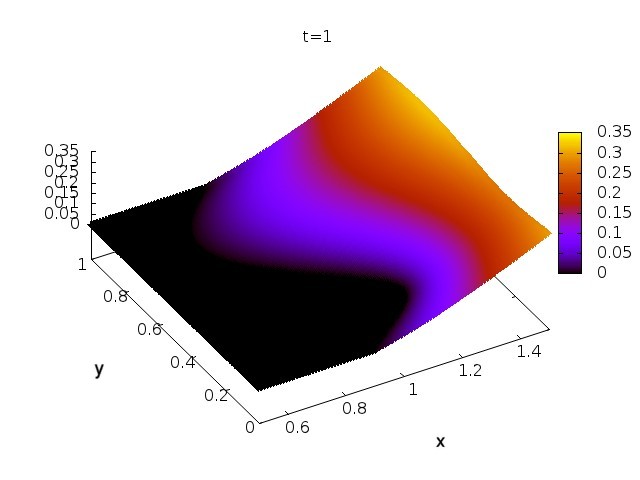}%\\
\end{tabular}
\end{center}
\caption{Snapshots of $p(t,.)$ plotted for $(x,y)\in[0.5,1.5]\times\T$ with parameters $m=1.1$, $\alpha(y)=\alpha_1(y)$, $c=0.6$, $X_{\max}=10$, $\rd x=\rd y =5.e^{-3}$.}
\label{fig:example_cauchy}
\end{figure}
%
%%%%%%%%%%%%%%%%%%%%%%%%%%%%%%%%%%%%%%%%%%%%%%%%%%%%%%%%%%%%%%%%%%%%%%%%%%%%%%%%%%%%%%%%%%%%%%%%%%%%%%%%%%%%%%
%
\subsection{Long-time convergence and slow drift}
\label{subsection:long-time_convergence}
As just discussed we aim at computing the stationary wave profile as the long-time asymptotic of the Cauchy solutions in the wave frame $x+ct$, and we applied Neumann condition $\partial_xp(t,X_{\max},y)=c$ on the right boundary in order to mimic the ``slope=speed'' behaviour in Theorem~\ref{theo:main}(ii).
However, since we necessarily compute on finite domains $x\in [0,X_{\max}]$, there is an inevitable discrepancy between the numerical paradigm on finite domains $\partial_x p(t,X_{\max},y)=c$ and the theoretical model $\partial_x p(+\infty,y)=c$.
As a result we cannot really expect any long-time convergence, even at the continuous level, and the finiteness of the domain will always lead to some residual error.
This is illustrated in figure \ref{fig:erreur_L2}: we see that
$\|\partial_tp(t,.)\|_{L^2}\rightarrow C_2>0$ and $\|\partial_tp(t,.)\|_{L^{\infty}}\rightarrow C_{\infty}>0$ as $t\rightarrow +\infty$, compared to the expected $\partial_tp\rightarrow 0$ for any long-time convergence.
\begin{figure}[!ht]
\begin{center}
\begin{tabular}{cc}
\includegraphics[width=7cm]{./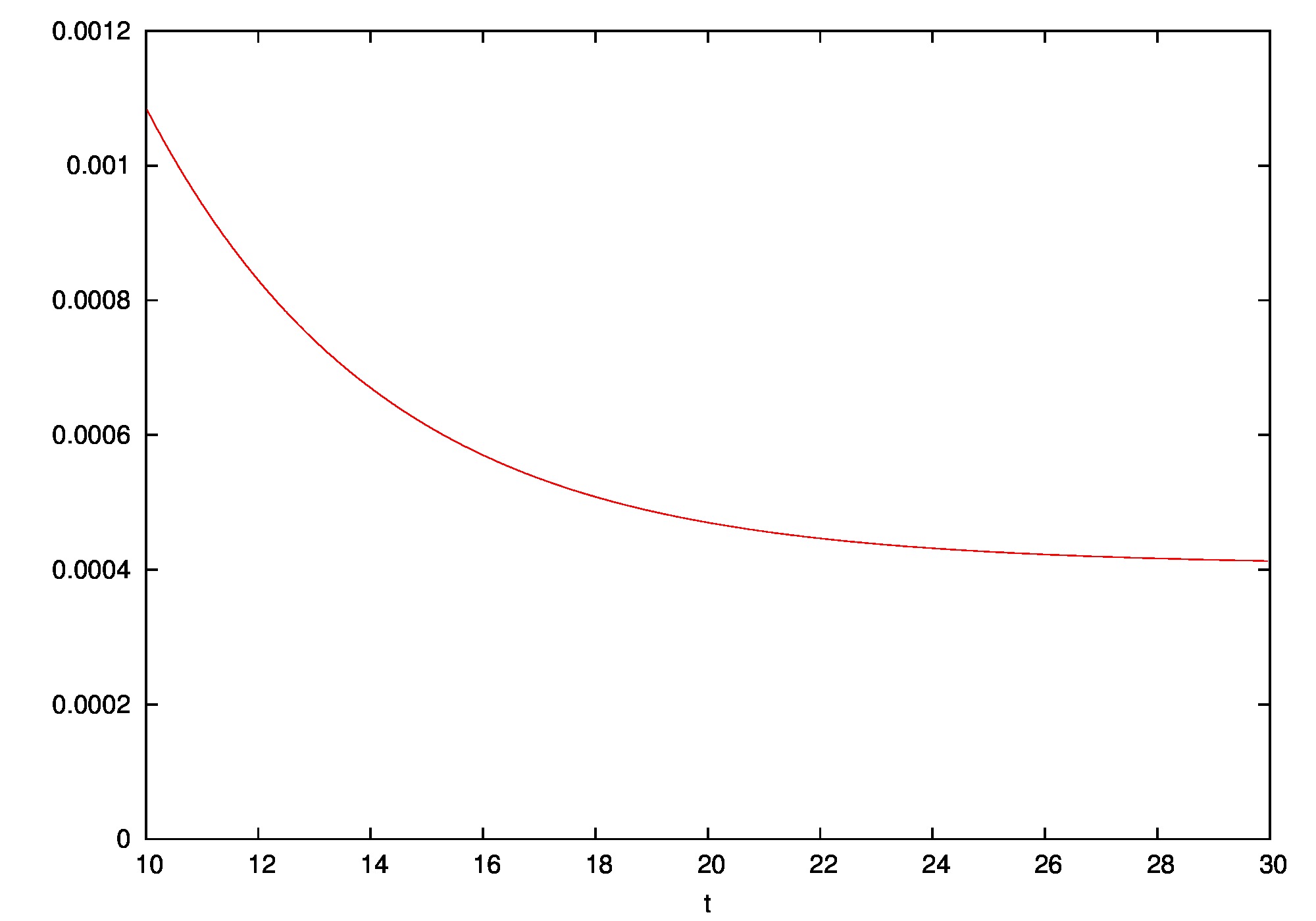}& \includegraphics[width=7cm]{./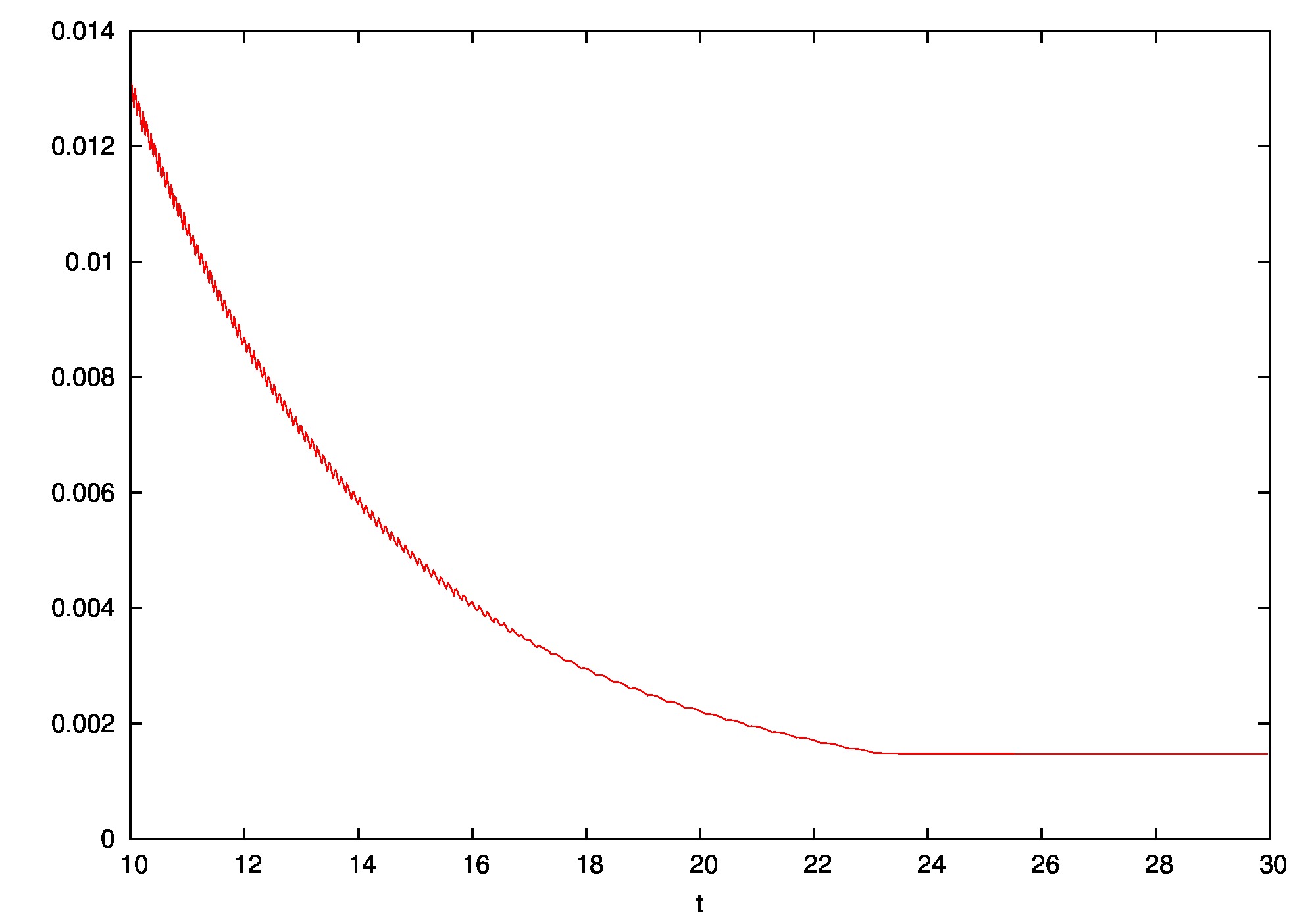}
\end{tabular}
\end{center}
\caption{Long-time asymptotics and residual error. $\|\partial_tp(t,.)\|$ plotted versus time in the $L^2$ (left) and $L^{\infty}$ (right) norms, with parameters $m=0.1$, $\alpha(y)=\alpha_2(y)$, $c=0.4$, $\rd x=\rd y=5.e^{-3}$}
\label{fig:erreur_L2}
\end{figure}

A natural but heuristic explanation, which we observed from our simulations, is the following: since the difference between the numerical and theoretical models comes from $X_{\max}<+\infty$, the numerical solution tends to globally shift in the $x<0$ direction in order to drive away from the right boundary and accommodate for the discrepancy. 
If $\overline{p}(x,y)$ denotes the stationary wave profile, this means that we expect the ansatz
\begin{equation}
p(t,x,y)\approx \overline{p}\left(x+X^*(t),y\right).
\label{eq:ansatz_p_shift}
\end{equation}
In order to determine and measure numerically the shift $X^*(t)$ we choose an arbitrary $y_0\in \T$ and monitor the quantity
\begin{equation}
\tilde{p}(t):=p(t,X_{\max},y_0),
\label{eq:def_p_tilda(t)}
\end{equation}
which can be numerically evaluated.
Indeed, this marker should evolve as
$$
\frac{d \tilde{p}}{d t}(t)=\partial_tp(t,X_{\max}, y_0)\approx \partial_x \overline{p}\left(X_{\max}+X^*(t),y_0\right) \frac {dX^*}{dt}(t).
$$
Since $X_{\max}$ is chosen large and $X^*(t)$ increases we expect the aforementioned ``slope=speed'' behaviour $\partial_x\overline{p}\left(X_{\max}+X^*(t),y_0\right)\approx \partial_x\overline p(\infty,y_0)=c>0$, and the shift $X^*(t)$ can thus be computed approximately as
\begin{equation}
\frac {dX^*}{dt}(t)\approx \frac{1}{c}\frac {d\tilde p}{dt}(t).
\label{eq:Xdot_heuristc_copmutation}
\end{equation}
In our computations $X^*(t)$ grows almost linearly in time but very slowly (typical values were $dX^*/dt\approx\mathcal O(10^{-3})$ over simulated $30-100$s times), and the solution accordingly drifts to the left.

The ansatz \eqref{eq:ansatz_p_shift} also suggests that one should in fact look for long-time convergence in the slowly moving frame $x+X^*(t)$, rather than in the fixed computational frame of reference.
In order to do so we observe from \eqref{eq:ansatz_p_shift} that
$$
\partial_t p(t,x,y)\approx \partial_x \overline{p}(x+X^*(t),y)\frac {dX^*}{dt}(t)\approx \partial_x p(t,x,y)\frac{d X^*}{d t}(t).
$$
\begin{figure}[h!]
\begin{center}
\begin{tabular}{cc}
\includegraphics[width=7cm]{./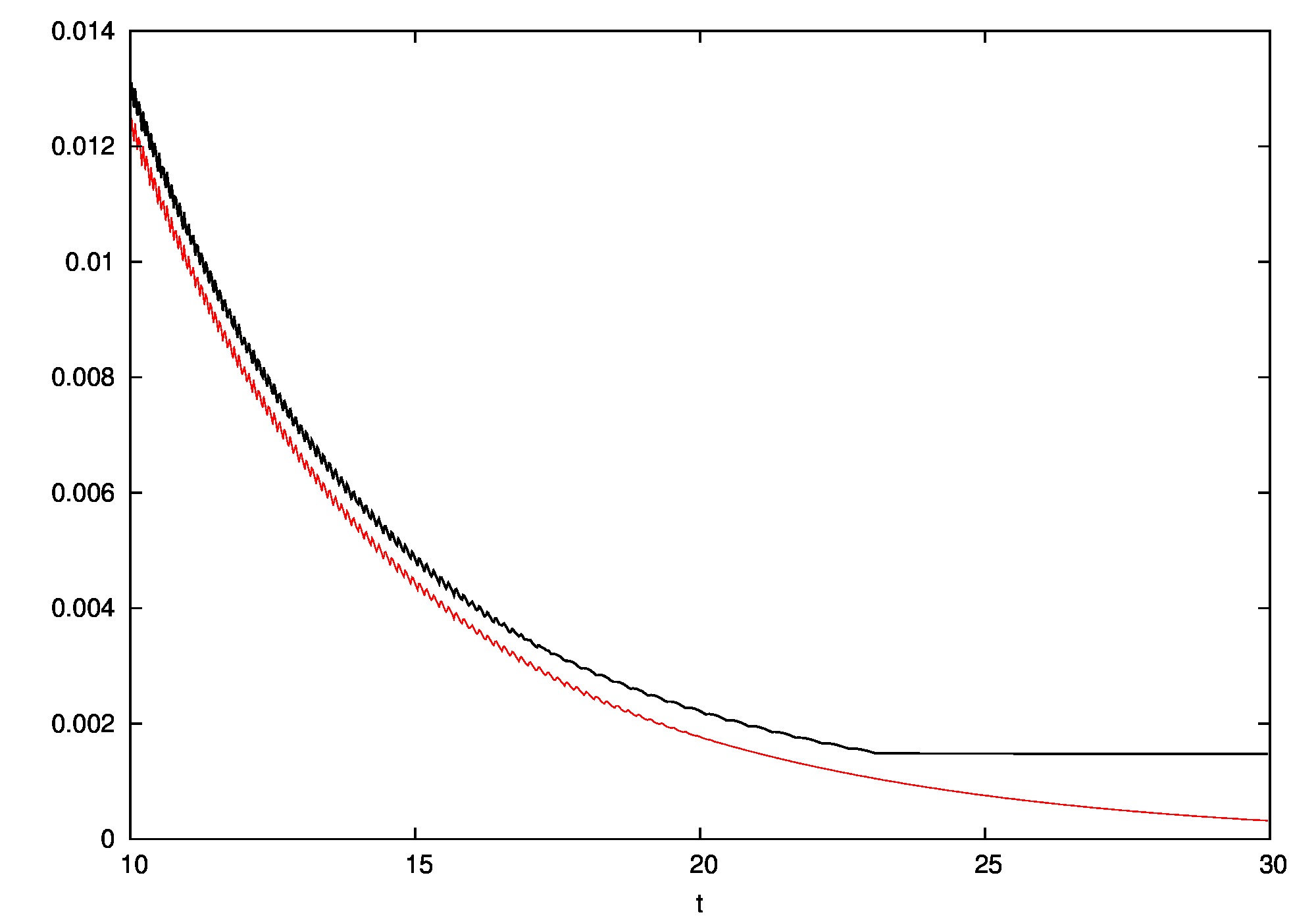}&
\includegraphics[width=7cm]{./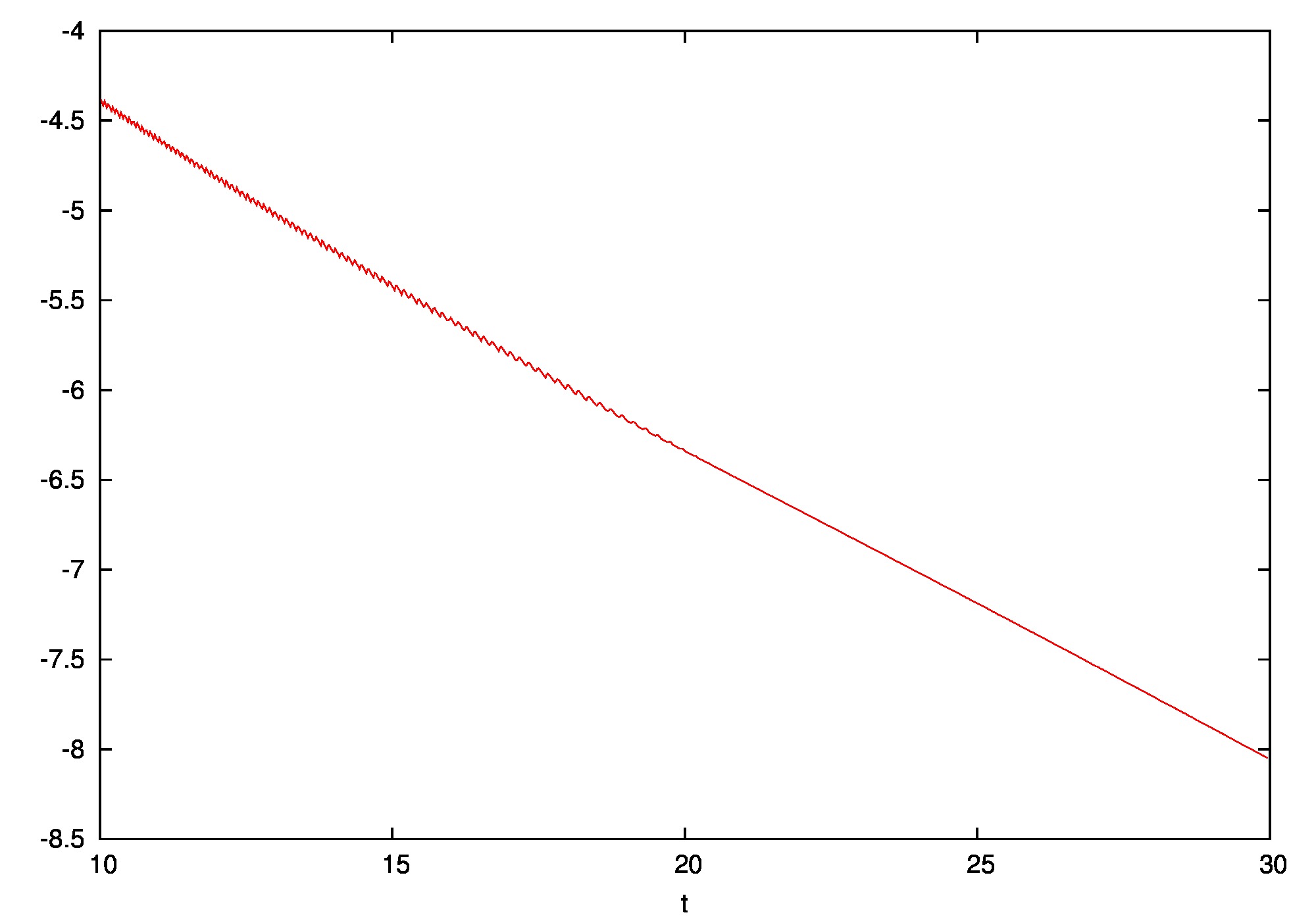}
\end{tabular}
\end{center}
\caption{Long-time convergence \eqref{eq:p_CV_correction} in the steady numerical frame.
To the left: the corrected error $e_{corr}(t)=\|\partial_tp(t,.)-\partial_xp(t,.)dX^*/dt(t)\|_\infty$ (in red) decays to zero, whereas the uncorrected $\|\partial_tp(t,.)\|_\infty$ (black) does not. 
To the right: plotting $\log e_{corr}(t)$ as a function of time indicates exponential convergence in the frame $x+X^*(t)$.
The parameters are $m=0.1$, $\alpha(y)=\alpha_2(y)$, $c=0.4$, and $\rd x=\rd y =5.e^{-3}$.}
\label{fig:errorLinfty_correction}
\end{figure}
This explains the previous residual error $\partial_tp\nrightarrow 0$ (see again Figure~\ref{fig:erreur_L2}), but also implies that we should have in the steady computational $x$-frame
\begin{equation}
\partial_tp-\partial_x p  \frac{dX^*}{dt}\underset{t\rightarrow +\infty}{\longrightarrow}0.
\label{eq:p_CV_correction}
\end{equation}
This convergence in the slowly drifting frame $x+X^*(t)$ can be checked numerically using \eqref{eq:def_p_tilda(t)}-\eqref{eq:Xdot_heuristc_copmutation} to compute the drift $dX^*/dt$: as shown in figure \ref{fig:errorLinfty_correction}, the convergence \eqref{eq:p_CV_correction} seems to be exponential and confirms our ansatz \eqref{eq:ansatz_p_shift}, and the scheme should therefore correctly approximate the stationary wave profiles in the long-time regime.
% %
%%%%%%%%%%%%%%%%%%%%%%%%%%%%%%%%%%%%%%%%%%%%%%%%%%%%%%%%%%%%%%%%%%%%%%%%%%%%%%%%%%%%%%%%%%%%%%%%%%%%%%%%%%%%%%
%
\section{Free boundaries and nondegeneracy}
\label{section:nondegeneracy}
As stated in Theorem~\ref{theo:main}, the free boundary $\Gamma=\partial\{p>0\}$ for the stationary wave profile can be parametrized in the privileged direction of propagation as
$$
p(x,y)>0\Leftrightarrow x>\mathcal I(y),
$$
where $\mathcal I(y)$ is a periodic upper semi-continuous function.
In most free boundary problems, one expects a gradient discontinuity across the interface, and the free boundary is said to be nondegenerate if the solution has a nontrivial growth across the interface.
This can be characterized by $\left|\nabla p_{}\right|_{\Gamma^+}\neq 0$, where $\left|\nabla p_{}\right|_{\Gamma^+}$ stands for the inner limit from the ``hot side'',
$$
\left|\nabla p_{}\right|_{\Gamma^+}:=\displaystyle{\lim_{(x,y)\overset{D_+}{\rightarrow}\Gamma}}|\nabla p|
$$
whenever this quantity makes sense.
At least for the pure PME $\partial_tp-mp\D p=|\nabla p^2|$, the regularity and propagation properties of the free boundary are strongly related to this nondegeneracy.
To see this one can formally evaluate the equation at the free boundary where $p$ vanishes, and get $\partial_t p=|\nabla p|^2$ along $\Gamma_t=\partial\{p(t,.)>0\}$ in some suitable (viscosity) sense.
This differential equation tells us that the free boundary moves in the outward normal direction with speed $-\left.\nabla p_{}\right|_{\Gamma^+_t}$ (the hot support $\{p>0\}$ invades the cold $\{p =0\}$, which is a remainder of the diffusive nature of the model at the interface), thus enlightening the role of the nondegeneracy.
In \cite{CaffWol-C1alpha} it is proved that, if the initial free boundary is nondegenerate at time $t=0$, then it starts to move immediately, never stops afterward, and remains nondegenerate.
The regularity and growth/nondegeneracy are also closely related to parabolic Harnack inequalities and monotonicity properties, see e.g. \cite{dahlberg1984non,DK07} and \cite{alt1984variational,caffarelli2005geometric}.
\begin{figure}[!ht]
\begin{center}
\begin{tabular}{cc}
\includegraphics[width=7cm]{./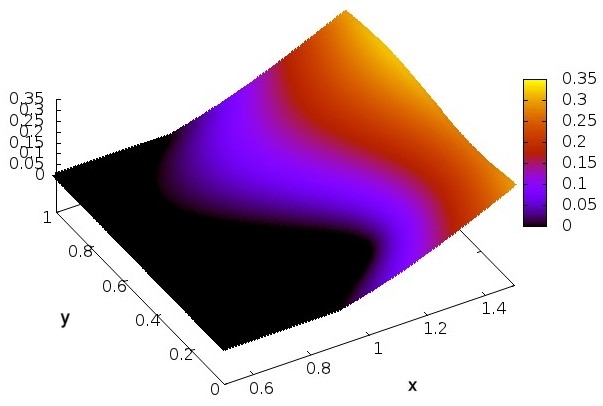} &
\includegraphics[width=7cm]{./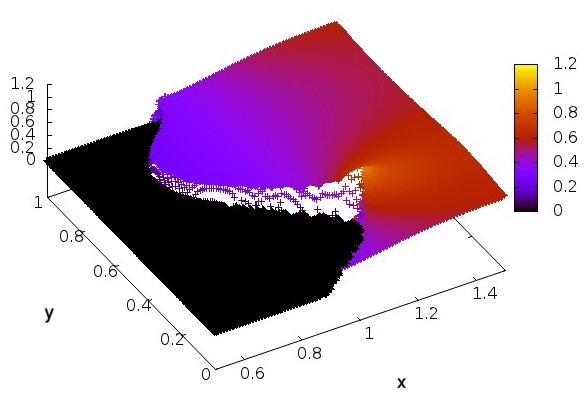}\\
\end{tabular}
\end{center}
\caption{Discontinuity of $\partial_x p$ across the free boundary.
Plots of $p(x,y)$ (left) and $\partial_x p(x,y)$ (right), with parameters $m=1.1$, $\alpha(y)=\alpha_1(y)$, $c=0.6$, $X_{\max}=10$, and $\rd x=\rd y= 5.e^{-3}$.
The time $t=30$ is large enough so that the Cauchy solution has converged to the stationary wave profile.}
\label{fig:px_discontinuity}
\end{figure}
\par
In the present case $\alpha(y)\not\equiv 0$ we expect a similar scenario: since we start with an initial datum $p^0(x,y)=c[x-\tau]^+$ whose free boundary is nondegenerate $\partial_x p^0|_{x=\tau^+}=c> 0$, the free boundary should stay nondegenerate as time evolves (although we do not claim here to prove this highly non-trivial statement).
This persisting gradient discontinuity across the interface should therefore be well adapted to detect the (moving) free boundary.
Since we are interested in traveling waves, the direction of propagation $x$ naturally plays an important role.
The numerical computations indeed always exhibit a jump of $\partial_x p$ across the free boundary, as illustrated in Figure~\ref{fig:px_discontinuity}, and therefore, supported by this numerical evidence, we always \emph{assume} that all the free boundaries remain nondegenerate.
Thus $\partial_x p$ will be a relevant quantity to monitor, and in fact the nondegeneracy $\left.\partial_x p\right|_{\Gamma}\geq c_0>0$ will be the key ingredient to prove the Lipschitz regularity in section~\ref{section:regularity_corners} (see Theorem~\ref{theo:regularity} later on).
The discontinuity of $\partial_x p$ moreover leads to a (positive) Dirac singularity $\partial^2_{xx}p=+\infty$ at $\Gamma_t$, and this blowup is very easy to track numerically in order to detect the free boundary (recall from Theorem~\ref{theo:main} that the solution is $\mathcal C^{\infty}$ in $\{p>0\}$, hence singularities of $\partial^2_{xx}p$ can only occur at the interface).
Once the free boundary is detected we can compute numerically $\partial_x p$ ``at'' $\Gamma$ and check the nondegeneracy.
More precisely, we use

\begin{algo}[Free Boundary detection by $\partial^2_{xx}p$ singularity and computation of $\left.\partial_x p\right|_{\Gamma^+}$]
\label{algo:FB_detection_pxx}
Choose some integer parameter $s>0$, and for each $j\in\llbracket 1,N_y\rrbracket$:
\begin{enumerate}
\item for $i\in \llbracket 2,N_x-1 \rrbracket$, compute $\D_{xx} P_{i,j}=\cfrac{P_{i+1,j}+P_{i-1,j}-2P_{i,j}}{\rd x^2}$
\item find the maximum of $\D_{xx}P_{i,j}$ over $i\in\llbracket 2,N_x-1\rrbracket$, and denote by $I(j)$ it's location
\item the position of the free boundary is given by $\mathcal I(y_j)\approx x_{I(j)}$
\item 
compute $\partial_x p$ at the free boundary as $\left.\partial_x p\right|_{\Gamma^+}(y_j)\approx\cfrac{P_{i_0+s+1,j}-P_{i_0+s,j}}{\rd x}$
\end{enumerate}
\end{algo}
Note that this detection procedure only sweeps in space (in the $x$ direction), and can therefore be used dynamically for any fixed time to detect moving free boundaries $\Gamma_t=\partial\{p(t,.)>0\}$ for the Cauchy problem \eqref{eq:PME-S_pressure_parabolic_waveframe}.
Due to the numerical diffusion smoothing the gradient discontinuity, $\partial_x p$ actually jumps across a small $\mathcal O(\rd x)$ numerical boundary layer where the finite difference approximations may become inaccurate.
This is why we choose to step $s$ meshes away in the $x>0$ direction when computing what should be $\partial_x p$ at the free boundary (step 4).
In our experiments the choice $s=5$ gave satisfactory results even for high resolutions $\rd x=\rd y=1.e^{-3}$.
Note also that the approximation $\left.\partial_x p\right|_{\Gamma^+}\approx\cfrac{P_{i_0+s+1,j}-P_{i_0+s,j}}{\rd x}$ in step 4 is a forward difference: the relevant information to compute $\left.\partial_x p\right|_{\Gamma^+}$ ``at the free boundary'' should come indeed from the hot side $D_+=\{p>0\}$, here to the $x>\mathcal I(y)$ side of the free boundary as in Theorem \ref{theo:main}. 

A typical result obtained with Algorithm~\ref{algo:FB_detection_pxx} is illustrated in Figure \ref{fig:algo_FB_detection}: clearly $\partial_x p$ jumps across the interface and stays bounded away from zero in $\{p>0\}$, the solution grows linearly in $x$ across $\Gamma$, and the interface is thus nondegenerate in the direction of propagation.
Note the apparent boundary layer and numerical diffusion in the bottom left image, where a few meshpoints are visible inside the numerical boundary layer.
%figure comment
\begin{figure}[h!]
\begin{center}
\begin{tabular}{cc}
\includegraphics[width=7cm]{./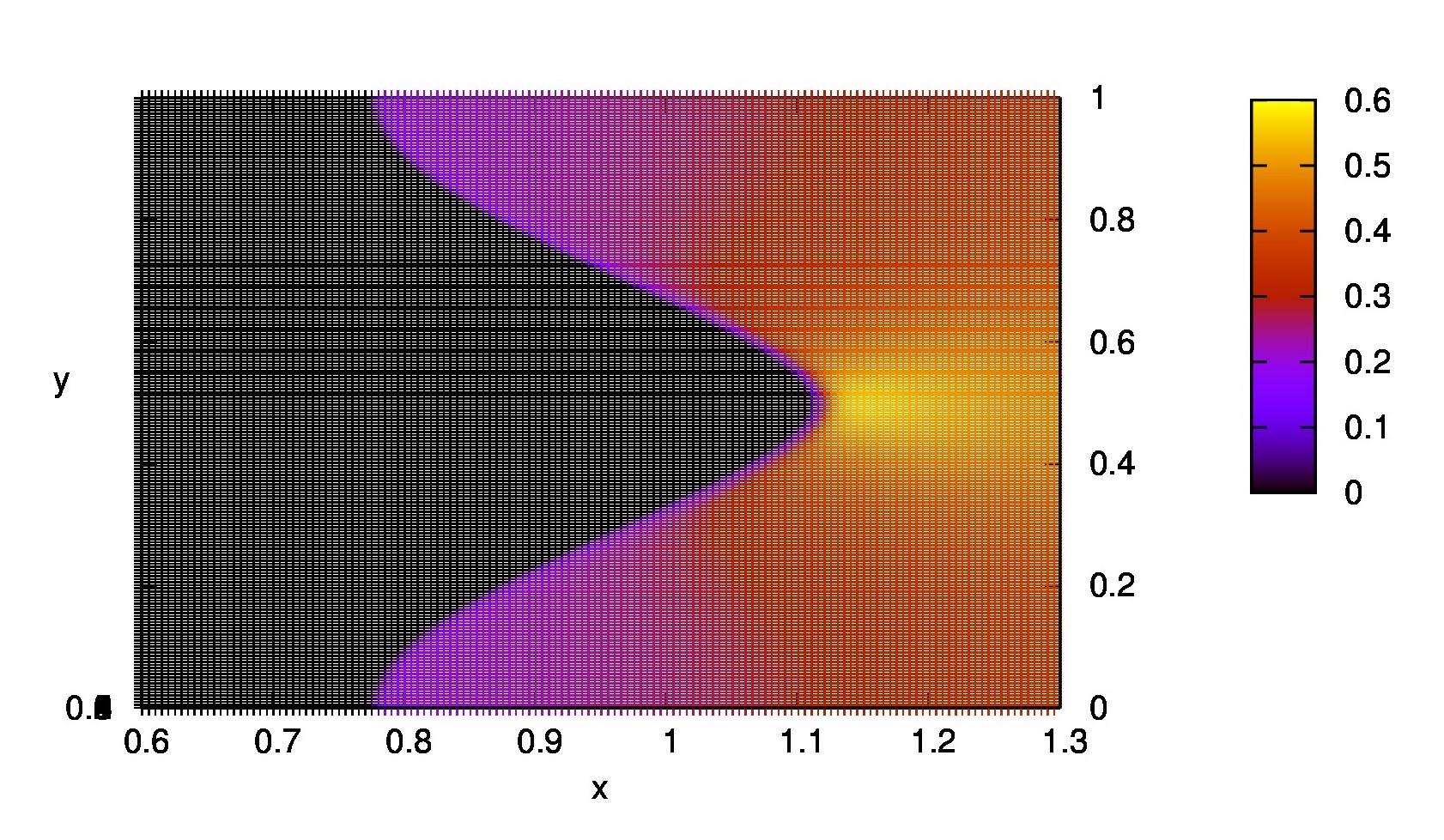} &
\includegraphics[width=7cm]{./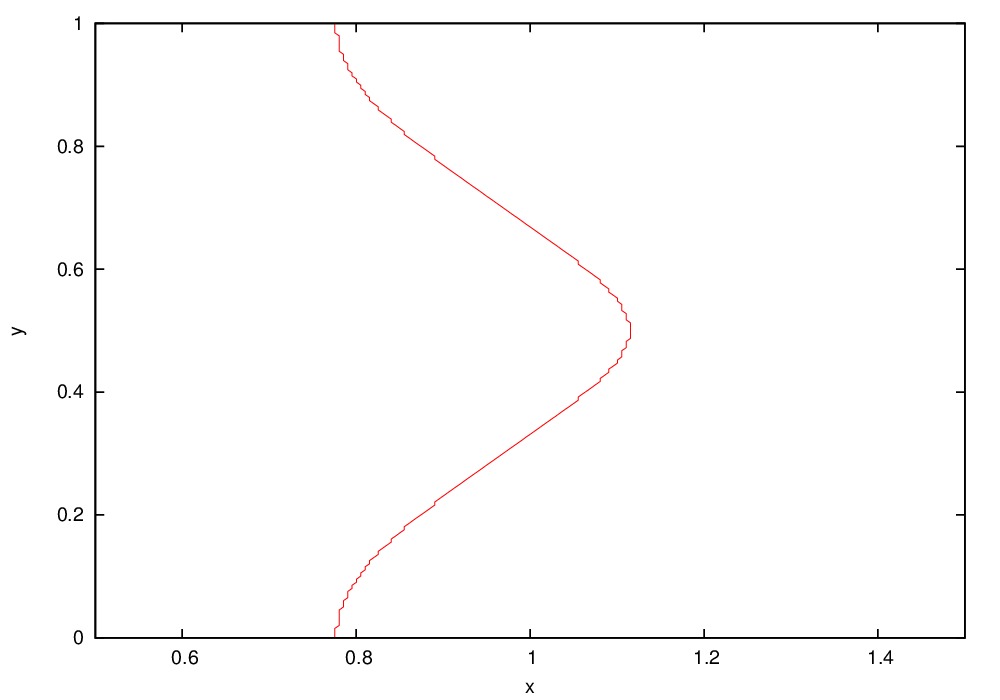}\\
 \includegraphics[width=7cm]{./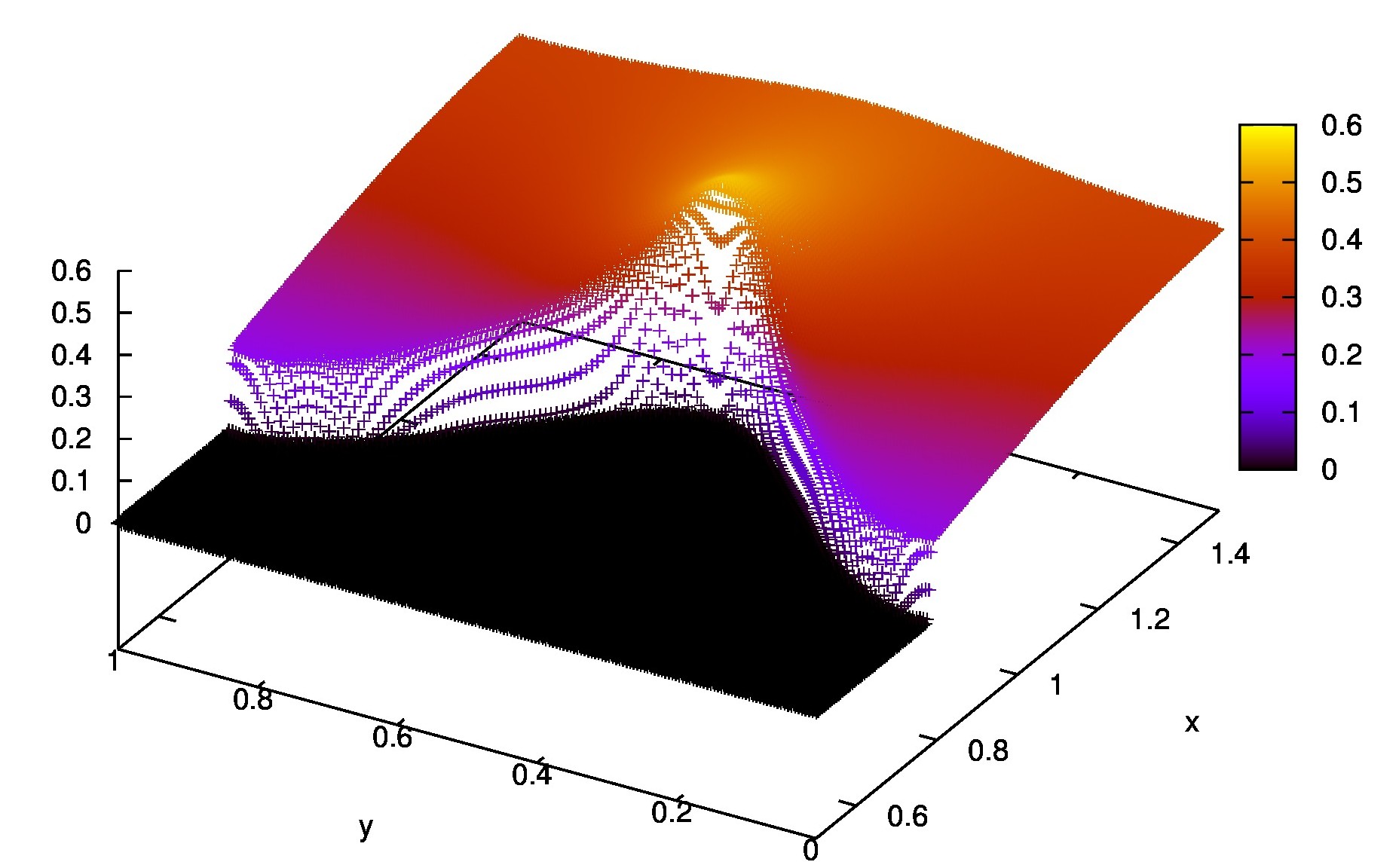} &
 \includegraphics[width=7cm]{./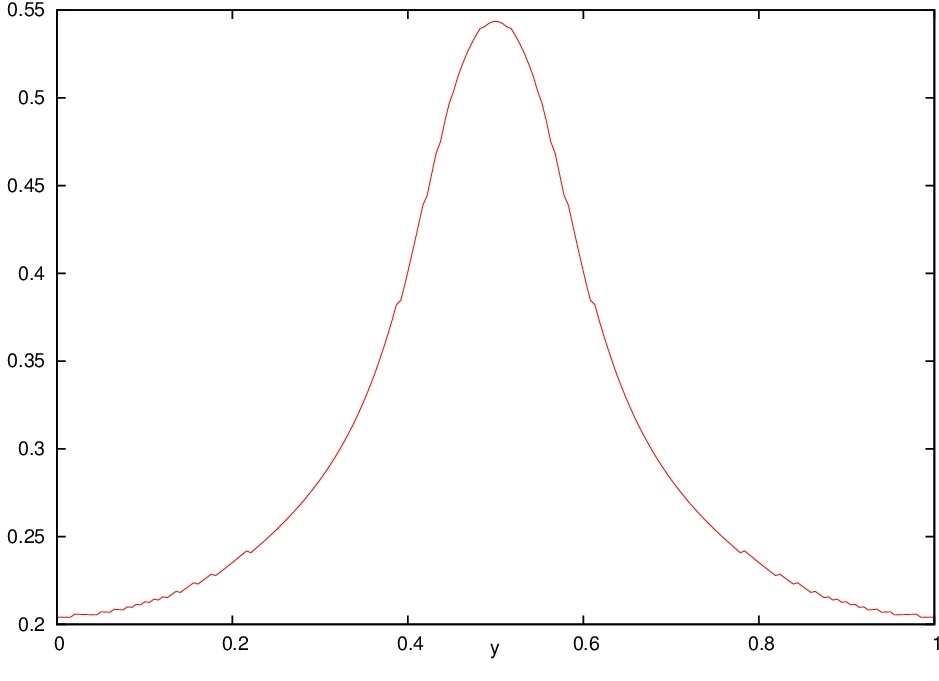}\\
\end{tabular}
\end{center}
\caption{Example of a numerical computation with Algorithm~\ref{algo:FB_detection_pxx}.
View of $\partial_x p$ from different angles (top and bottom left), detection of the free boundary $x=\mathcal I(y)$ (top right), and computation of $\left.\partial_x p\right|_{\Gamma^+}$ as a function of $y$ (bottom right). The parameters are $m=1.1$, $\alpha(y)=\alpha_2(y)$, $c=0.4$, $X_{\max}=10$, $\rd x=\rd y=5.e^{-3}$.
The time $t=30$ is large enough so that the Cauchy solution has converged to the stationary wave profile.}
\label{fig:algo_FB_detection}
\end{figure}

%
%%%%%%%%%%%%%%%%%%%%%%%%%%%%%%%%%%%%%%%%%%%%%%%%%%%%%%%%%%%%%%%%%%%%%%%%%%%%%%%%%%%%%%%%%%%%%%%%%%%%%%%%%%%%%%
%

\section{Interface regularity and corners}
\label{section:regularity_corners}
In the rest of the paper we consider the general $d$ dimensional case $(x,y)\in\R\times\mathbb{T}^{d-1}$ for the analysis, but still restrict to $d=2$ for the simulations.
%
%%%%%%%%%%%%%%%%%%%%%%%%%%%%%%%%%%%%%%%%%%%%%%%%%%%%%%%%%%%%%%%%%%%%%%%%%%%%%%
%
\subsection{A regularity result}
\label{subsection:regularity_result}
Let us recall from Theorem~\ref{theo:main} that $p\in\mathcal{C}^{\infty}(D_+)$ and $\partial_x p>0$ in  $D_+=\{p>0\}$.
Hence for any $\eps>0$ the levelset $\Gamma_{\eps}=\{p=\eps\}$ can be parametrized by the Implicit Functions Theorem as a graph
$$
(x,y)\in\Gamma_{\eps}\quad\Leftrightarrow\quad p(x,y)=\eps\quad\Leftrightarrow\quad x=X_{\eps}(y), \quad y\in\mathbb{T}^{d-1}
$$
for some function $X_\eps:\mathbb T^{d-1}\to \R$ satisfying in particular
\begin{equation}
\nabla_yX_{\eps}(y)=-\cfrac{1}{\partial_x p\left(X_{\eps}(y),y\right)}\nabla_yp\left(X_{\eps}(y),y\right).
\label{eq:Xeps'=}
\end{equation}
Differentiating again w.r.t $y$, using \eqref{eq:PME-S_pressure_stationary}, and dividing by $\partial_x p\left(X_{\eps}(y),y\right)>0$ leads to
\begin{multline}
-\eps\frac{m}{\partial_x p} \D_{y}X_{\eps}
+\underbrace{\left|\nabla_y X_{\eps}\right|^2}_{:=H(\nabla_yX_\eps)}\\
+\eps\underbrace{\frac{m}{\partial_x p}\left[\partial^2_{xx}p\left(1-\left|\nabla_y X_{\eps}\right|^2\right)-2\partial^2_{xy}p\cdot\nabla_y X_{\eps}\right]}_{:=H_{1,\eps}(y,\nabla_yX_\eps)}=\underbrace{\left(\cfrac{c+\alpha}{\partial_x p}-1\right)}_{:=g_\eps(y)},
\label{eq:HJ_eps}
\end{multline}
where the terms $\partial_x p,\partial_{xx}^2p\in \R$ and $\partial^2_{xy}p\in \R^{d-1}$ are implicitly evaluated at the $\eps$-levelset $x=X_\eps(y)$ and considered as functions of $y\in\T$ only with a slight abuse of notations.
Because $\Gamma=\partial\{p>0\}$ is in some sense the $0$-levelset closest to $\{p>0\}$, it seems natural that we should recover the free boundary parametrization $\{x=\mathcal I(y)\}$ as the limit of the $\eps$-parametrization $\{x=X_{\eps}(y)\}$ along the levelset descent $\eps\searrow 0$.
This is exactly the strategy of proof of the following regularity result:
\begin{theo}
When $\eps\rightarrow 0^+$ assume that
\begin{equation}
\eps\left(\left|\partial^2_{xx}p\right|_{\Gamma_{\eps}}(y)+\left|\partial^2_{xy}p\right|_{\Gamma_{\eps}}(y)\right) \overset{L^{\infty}(\mathbb T^{d-1})}{\longrightarrow}0,
\label{hyp:H1}
\tag{$\mathcal{H}1$}
\end{equation}
\begin{equation}
f_{\eps}(y):=\left. \partial_x p\right|_{\Gamma_{\eps}}(y)\overset{L^{\infty}(\mathbb T^{d-1})}{\longrightarrow} f(y)>0
\label{hyp:H2}
\tag{$\mathcal{H}2$}
\end{equation}
for some limit $f(y)\geq C>0$.
Then the interface parametrization $\mathcal I(y)$ is a periodic viscosity solution of the Hamilton-Jacobi equation
\begin{equation}
\left|\nabla_yI\right|^2=g\quad \text{in }\mathbb T^{d-1},
\qquad g(y):=\left(\cfrac{c+\alpha(y)}{f(y)}-1\right).
\label{eq:HJ_0}
\tag{HJ}
\end{equation}
Moreover $\mathcal I(.)$ is globally Lipschitz and semi-concave, and the free boundary $\Gamma=\partial\{p>0\}$ coincides with the Lipschitz graph $\{x=\mathcal I(y)\}$.
In dimension $d=2$ the function $\mathcal I(y)$ is everywhere left and right differentiable on $\T$.
\label{theo:regularity}
\end{theo}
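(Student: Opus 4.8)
The plan is to realise the interface $\mathcal I$ as the $\eps\searrow 0$ limit of the smooth level-set graphs $X_\eps$ and to pass to the limit in \eqref{eq:HJ_eps}. First I would establish a Lipschitz bound on $X_\eps$ uniform in $\eps$: by \eqref{eq:Xeps'=} one has $\nabla_y X_\eps(y)=-\nabla_y p(X_\eps(y),y)/\partial_x p(X_\eps(y),y)$, whose numerator is controlled by the global Lipschitz constant of $p$ (Theorem~\ref{theo:main}(ii)) and whose denominator is $f_\eps(y)\geq C/2>0$ for $\eps$ small by \eqref{hyp:H2}; hence $\|\nabla_y X_\eps\|_{L^\infty}$ is bounded uniformly in (small) $\eps$. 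Next I would identify the limit: since $\partial_x p>0$ in $D_+$ and $\{x:p(x,y)>0\}=(\mathcal I(y),+\infty)$, the map $x\mapsto p(x,y)$ is strictly increasing there, so $\eps\mapsto X_\eps(y)$ is monotone with $X_\eps(y)\to\mathcal I(y)$ by the characterisation \eqref{eq:char_I(y)}. The family $\{X_\eps\}$ is equi-Lipschitz and equibounded (it is monotone and $\mathcal I$ is bounded), hence precompact in $C(\mathbb{T}^{d-1})$; every uniform subsequential limit equals the pointwise limit $\mathcal I$, so $X_\eps\to\mathcal I$ uniformly and $\mathcal I$ is Lipschitz. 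In particular $\mathcal I$ is continuous at every point, so Theorem~\ref{theo:main}(b) forces $\Gamma=\{x=\mathcal I(y)\}$.

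For the Hamilton-Jacobi equation I would use stability of viscosity solutions. Each $X_\eps\in C^\infty(\mathbb{T}^{d-1})$ is a classical, hence viscosity, solution of \eqref{eq:HJ_eps}, which I rewrite as $F_\eps(y,\nabla_y X_\eps,D^2_y X_\eps)=0$ with
$$
F_\eps(y,q,M):=-\eps\,\tfrac{m}{f_\eps(y)}\,\mathrm{tr}\,M+|q|^2+\eps\,H_{1,\eps}(y,q)-g_\eps(y),
$$
a degenerate elliptic operator since $m/f_\eps(y)>0$. By \eqref{hyp:H2} the artificial-viscosity coefficient $\eps\,m/f_\eps\leq 2m\eps/C\to 0$ uniformly; by \eqref{hyp:H1} together with the uniform gradient bound already obtained, $\eps\,H_{1,\eps}(y,\nabla_y X_\eps)\to 0$ uniformly; and $g_\eps(y)=(c+\alpha(y))/f_\eps(y)-1\to g(y)$ uniformly. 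Thus $F_\eps\to F(y,q,M):=|q|^2-g(y)$ locally uniformly in $(y,q,M)$, and the stability theorem for viscosity solutions under locally uniform convergence of both the solutions and the operators (see \cite{Barles,CrandallIshiiLions-userguide}) shows that $\mathcal I$ is a viscosity solution of \eqref{eq:HJ_0}. Passing to the limit in \eqref{eq:HJ_eps} also yields $g\geq 0$, since $|\nabla_y X_\eps|^2\geq 0$ while the remaining terms vanish.

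For semiconcavity I would exploit that the Hamiltonian $H(q)=|q|^2$ is uniformly convex and that $\mathcal I$ is precisely the vanishing-viscosity limit of the $X_\eps$: differentiating \eqref{eq:HJ_eps} twice in $y$ and using the uniform convexity of $H$ together with \eqref{hyp:H1}--\eqref{hyp:H2} produces a Bernstein-type one-sided bound $D^2_y X_\eps\leq C\,\mathrm{Id}$ uniform in $\eps$, which is inherited by $\mathcal I$; alternatively one quotes directly the regularity theory for viscosity solutions of convex Hamilton-Jacobi equations (see \cite{JensenSouganidis-regularity,Roquejoffre-PropQualHJ,Barles}). On the compact torus this gives global semiconcavity. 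Finally, in dimension $d=2$ a Lipschitz, semiconcave function on $\mathbb{T}^1$ is, up to subtracting $Cy^2/2$, concave, and a concave function of one real variable possesses left and right derivatives at every point; this is the last assertion.

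The main obstacle is the passage to the limit in \eqref{eq:HJ_eps}: controlling uniformly both the lower-order perturbation $\eps\,H_{1,\eps}(y,\nabla_y X_\eps)$ and the vanishing viscosity $\eps\,\tfrac{m}{\partial_x p}\,\Delta_y X_\eps$ as $\eps\to 0$. This is exactly what the two nondegeneracy hypotheses are designed for --- \eqref{hyp:H2} to keep the graphs $X_\eps$ equi-Lipschitz and to make the viscosity coefficient collapse, and \eqref{hyp:H1} to kill the curvature-type terms in $H_{1,\eps}$ --- and it is also the place where one must extract the specific (vanishing-viscosity, hence semiconcave) solution rather than an arbitrary viscosity solution of \eqref{eq:HJ_0}.
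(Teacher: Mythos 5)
Your proposal is correct and follows essentially the same route as the paper: uniform Lipschitz bounds on $X_\eps$ from \eqref{eq:Xeps'=} and \eqref{hyp:H2}, identification of the uniform limit with $\mathcal I$ via the characterization \eqref{eq:char_I(y)}, stability of viscosity solutions under the vanishing-viscosity limit using \eqref{hyp:H1}--\eqref{hyp:H2} to kill the $\eps$-terms, and then semiconcavity and one-dimensional left/right differentiability quoted from the convex, coercive Hamilton--Jacobi theory (\cite{Roquejoffre-PropQualHJ,JensenSouganidis-regularity}), exactly as in the paper. Your pointwise monotone identification of $\lim_{\eps\to 0}X_\eps(y)=\mathcal I(y)$ and the sketched Bernstein alternative for semiconcavity are minor variants of the same argument, with the quoted regularity theory being what the paper actually relies on.
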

Note that Theorem~\ref{theo:regularity} holds in any dimension $d\geq 2$, and that the Hamilton-Jacobi equation \eqref{eq:HJ_0} is not equivalent to $-|\nabla_y \mathcal I |^2=-g$ in the viscosity sense (this will be important later on in section~\ref{subsec:exist_corners}).
Let us first comment on our hypotheses \ref{hyp:H1}\ref{hyp:H2}, which will be validated numerically in the next section~\ref{subsection:num_validation_H1H2}. The first assumption \ref{hyp:H1} is rather technical and is only needed to retrieve the Hamilton-Jacobi equation, but seems reasonable compared to the usual PME scenario.
Indeed the PME planar wave $p_c(x,y)=c[x]^+$ is linear $\partial^2_{xy} p\equiv 0$ in $\{p_c>0\}$ and trivially satisfies \eqref{hyp:H1}, and for more general situations this hypothesis is consistent with the celebrated Aronson-B\'enilan semiconvexity estimate $\D p(t,.)\geq -\lambda/t$ in the steady frame \cite{AronsonBenilan-regularite,Vazq-PME} (giving in particular $\D p\geq 0$ in the wave frame, taking $t\to\infty$ for the stationary wave profile).
Assumption \ref{hyp:H2} is a more fundamental nondegeneracy condition. It can be viewed as an approximation to the strong nondegeneracy $\left.\partial_x p\right|_{\Gamma_+}>0$ along descending levelsets $\eps\searrow 0$, which is consistent with and supported by the numerical results in section~\ref{section:nondegeneracy}.
A closer look at the proof below will reveal that, regardless of \eqref{hyp:H1}, this second condition will imply that the free boundary is really the Lipschitz graph $\Gamma=\{x=\mathcal I(y)\}$, which so far was not guaranteed by Theorem \ref{theo:main}.
It is worth stressing that in \cite{MonsaingonNovikovRoquejoffre} we were not able to prove analytically any of the strong statements \eqref{hyp:H1}\eqref{hyp:H2}, and this is precisely why we use numerical computations in this paper as an investigation tool.
\begin{proof}
Considering $\left.\partial_x p\right|_{\Gamma_{\eps}},\left.\partial^2_{xx}p\right|_{\Gamma_{\eps}},\left.\partial^2_{xy}p\right|_{\Gamma_{\eps}}$ as known functions of $y$, \eqref{eq:HJ_eps} can be recast as
$$
-\eps \frac{m}{f_{\eps}(y)}\D_{y}X_{\eps}+H\left(\nabla_yX_{\eps}\right)+\eps H_{1,\eps}\left(y,\nabla_yX_{\eps}\right)=g_{\eps}(y)
$$
with $f_{\eps}(y):=\left.\partial_xp\right|_{\Gamma_{\eps}}(y)$ and Hamiltonians $H(\zeta)$, $H_{1,\eps}(y,\zeta)$ and $g_\eps(y)$ as in \eqref{eq:HJ_eps}.
Let us recall that \ref{hyp:H2} implies that $f_{\eps}(y)\sim f(y)\geq C>0$ uniformly in $y$ for $\eps$ small, and this PDE for $X_\eps$ is therefore uniformly elliptic for fixed $\eps>0$.

By \eqref{eq:Xeps'=} with $\|\partial_y p\|_\infty\leq C$ from Theorem~\ref{theo:main}(ii), we see that \eqref{hyp:H2} gives Lipschitz bounds on $X_{\eps}(.)$ uniformly in $\eps$.
We can therefore assume that
$$
X_{\eps}(.)\rightarrow X_0(.)\qquad 
\text{uniformly in } \T
$$
at least for some discrete subsequence, and the limit $X_0$ is also Lipschitz.
Hypothesis \ref{hyp:H2} implies $\partial_x p\geq C>0$ in some right neighborhood of the interface, and this strong monotonicity condition together with the characterization \eqref{eq:char_I(y)} of $\mathcal I(y)$ immediately imply that the limit is actually
$$
X_0(y)=\displaystyle{\lim_{\eps\rightarrow 0}}X_{\eps}(y)=\inf\left\{x\in\R:\, p(x,y)>0\right\}=\mathcal I(y).
$$
Because $\mathcal I(.)$ is now Lipschitz we can conclude by continuity that the free boundary $\Gamma=\partial\{x>\mathcal I(y)\}$ is really the graph $\{x=\mathcal I(y)\}$.

In order to establish \eqref{eq:HJ_0}, observe that \ref{hyp:H1}-\ref{hyp:H2} imply convergence $-m\eps/ f_{\eps}(.)\rightarrow 0$ uniformly on $\T$, and
$$
\eps H_{1,\eps}(y,\zeta)  \rightarrow  0,
\qquad
g_\eps(y)\to g(y)=\left(\cfrac{c+\alpha(y)}{f(y)}-1\right)
$$
locally uniformly in $(y,\zeta)\in\mathbb{T}^{d-1}\times\R^{d-1}$ when $\eps\rightarrow 0^+$.
By standard stability theorems \cite{Barles,CrandallIshiiLions-userguide} the uniform limit $\mathcal I(y)=X_0(y)=\displaystyle{\lim_{\eps\rightarrow 0}}\; X_{\eps}(y)$ is a viscosity solution of the limiting equation $H(\nabla_yX_0)=g$, which is exactly \eqref{eq:HJ_0}.
Note that this is exactly the classical construction of viscosity solutions by the vanishing viscosity method (up to the first order perturbation $\eps H_{1,\eps}(y,\zeta)\to 0$).
Since our periodic solution $X_0(y)=\mathcal I(y)$ is globally Lipschitz on the torus and the Hamiltonian $H(\zeta)=|\zeta|^2$ is coercive, the results in \cite{Roquejoffre-PropQualHJ} for stationary solutions apply and $\mathcal I(y)$ is semi-concave in any dimension $d\geq 2$.
The stronger everywhere left- and right-differentiability in dimension $d=2$ is finally a direct consequence of \cite[Theorem 1]{JensenSouganidis-regularity}, or alternatively follows by semiconcavity.
\end{proof}

%
%%%%%%%%%%%%%%%%%%%%%%%%%%%%%%%%%%%%%%%%%%%%%%%%%%%%%%%%%%%%%%%%%%%%%%%%%%%%%%%%%%%%%%%%%%%%%%%%%%%%
%
\subsection{Numerical validation of hypotheses \ref{hyp:H1}-\ref{hyp:H2}}
\label{subsection:num_validation_H1H2}
In Theorem~\ref{theo:regularity} we \emph{assumed} the regularity \eqref{hyp:H1} as well as the nondegeneracy condition \eqref{hyp:H2} to get the Hamilton-Jacobi equation \eqref{eq:HJ_0} and the Lipschitz regularity of the free boundary, and we need to validate numerically those strong assumptions.
Since $\partial_x p>0$ in $\{p>0\}$ it is easy to detect the levelsets by sweeping the numerical solution in the $x<0$ direction.
More precisely, for fixed $\eps>0$ we approximate the $\eps$-levelset $\Gamma_{\eps}=\{x=X_{\eps}(y)\}$ as
\begin{equation}
X_{\eps}(y_j)\approx x_{I_{\eps}(j)},\qquad I_{\eps}(j)=\min\{i\in \llbracket 1 ,N_x\rrbracket:\, P_{i,j}\geq \eps\}.
\label{eq:approx_eps_levelsets}
\end{equation}
We then compute finite difference approximations to $\partial^2_{xx}p$ and $\partial^2_{xy}p$ at the $\eps$-levelset as
$$
\left.\partial^2_{xx}p\right|_{\Gamma_{\eps}}(y_j)		\approx 	\Delta^2_{xx}P_{I_\eps(j),j}	=	\frac{P_{I_\eps(j)-1,j}-2P_{I_\eps(j),j}+P_{I_\eps(j)+1,j}}{\rd x^2}
$$
$$
\left.\partial^2_{xy}p\right|_{\Gamma_{\eps}}(y_j)	\approx	 \Delta_x(\Delta_y P_{I_{\eps}(j),j})=\frac{
\left(P_{I_\eps(j)+1,j+1}-P_{I_\eps(j)+1,j-1}\right)-\left(P_{I_\eps(j)-1,j+1}-P_{I_\eps(j)-1,j-1}\right)
}{4\rd x\, \rd y},
$$
and the slope is simply
$$
\left.\partial_{x}p\right|_{\Gamma_{\eps}}(y_j)	\approx \Delta_x P_{I_{\eps}(j),j}=\frac{P_{I_\eps(j)+1,j}-P_{I_\eps(j)-1,j}}{2\rd x}.
$$
The result is shown in Figures \ref{fig:H1}-\ref{fig:H2}: $\left.\eps\partial_{xx}^2p\right|_{\Gamma_\eps}$ and $\left.\eps\partial_{xy}^2p\right|_{\Gamma_\eps}$ appear to converge uniformly to zero as in \eqref{hyp:H1}, and $\left.\partial_x p\right|_{\Gamma_{\eps}}$ stays bounded away from zero as in \eqref{hyp:H2}.
%
%figure comment
\begin{figure}[h!]
\begin{center}
\begin{tabular}{cc}
\includegraphics[width=7cm]{./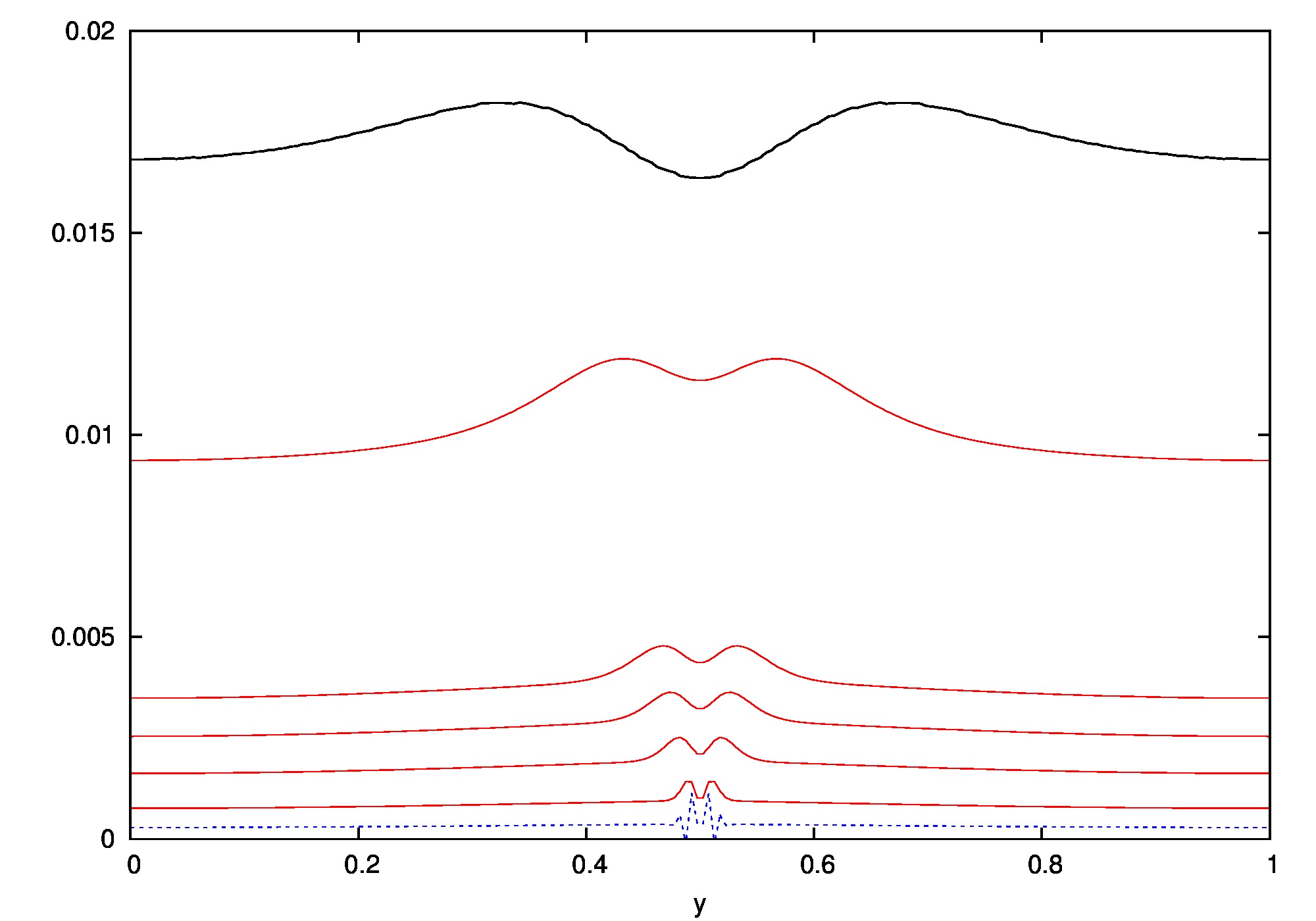} &
\includegraphics[width=7cm]{./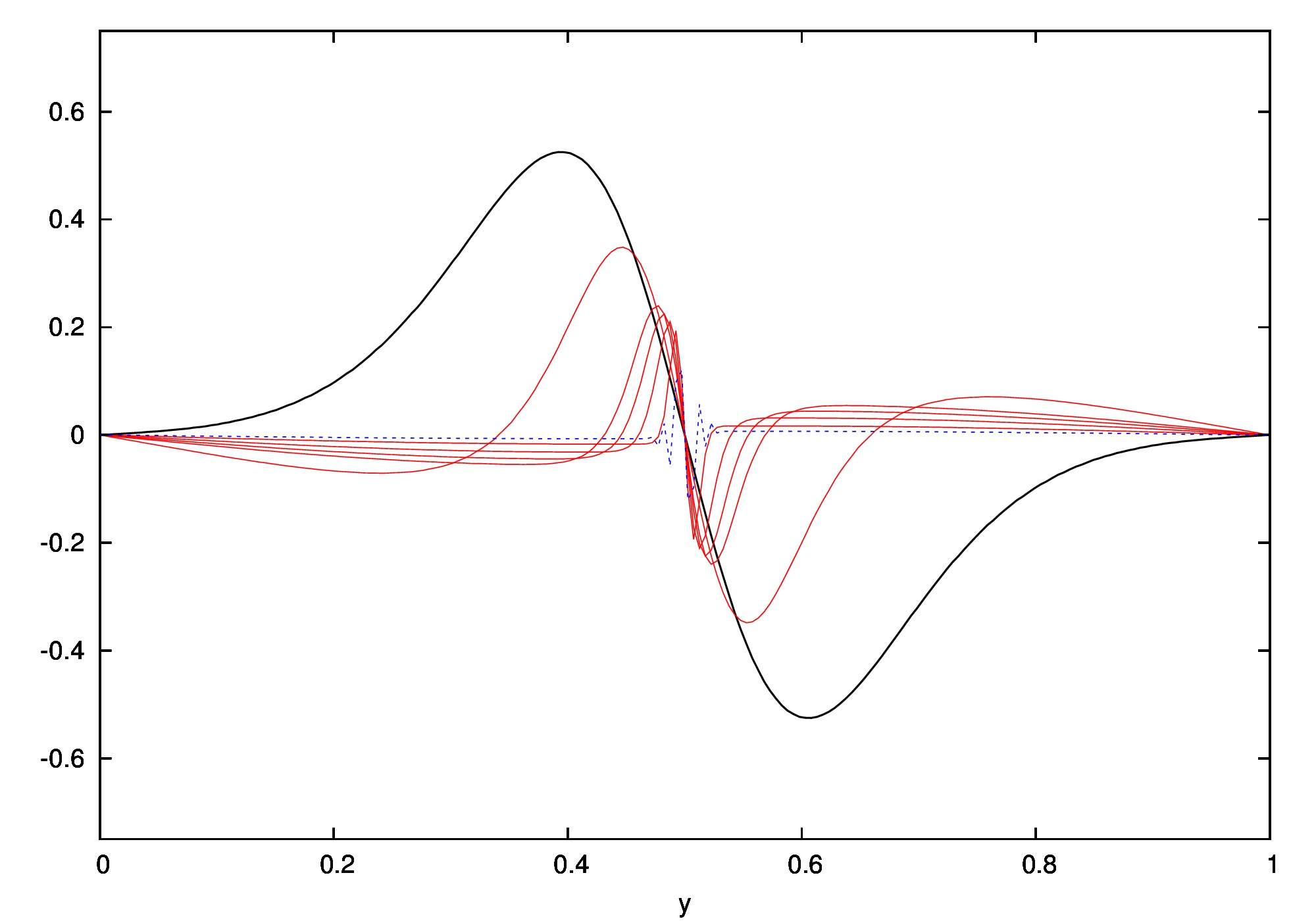}
\end{tabular}
\end{center}
\caption{Numerical validation of hypothesis \ref{hyp:H1}. $\left.\eps\partial^2_{xx}p\right|_{\Gamma_\eps}$ (left) and $\left.\eps\partial^2_{xy}p\right|_{\Gamma_\eps}$ (right) as functions of $y$ at the $\eps$-levelset, plotted for several values of $\eps$ between $5\cdot 10^{-1}$ (black) and $1\cdot 10^{-2}$ (blue).
The parameters are $\alpha(y)=\alpha_2(y)$, $c=0.4$, $m=0.1$, $X_{\max}=10$, $\rd x=\rd y= 5.e^{-3}$.
The time $t=30$ is large enough so that the Cauchy solution has converged to the stationary wave profile.}
\label{fig:H1}
\end{figure}
\begin{figure}[h!]
\begin{center}
\begin{tabular}{cc}
\includegraphics[width=7cm]{./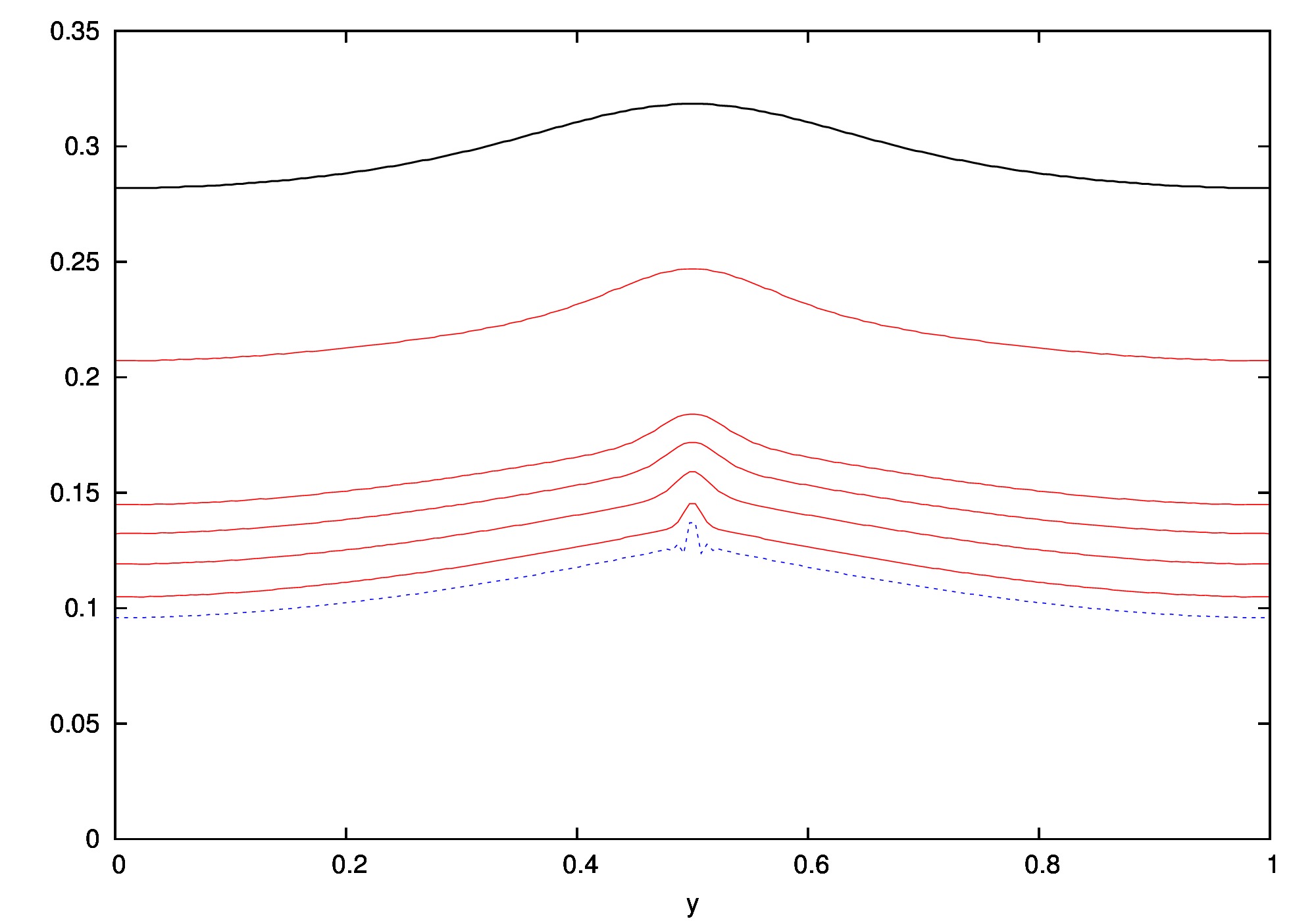} & \includegraphics[width=7cm]{./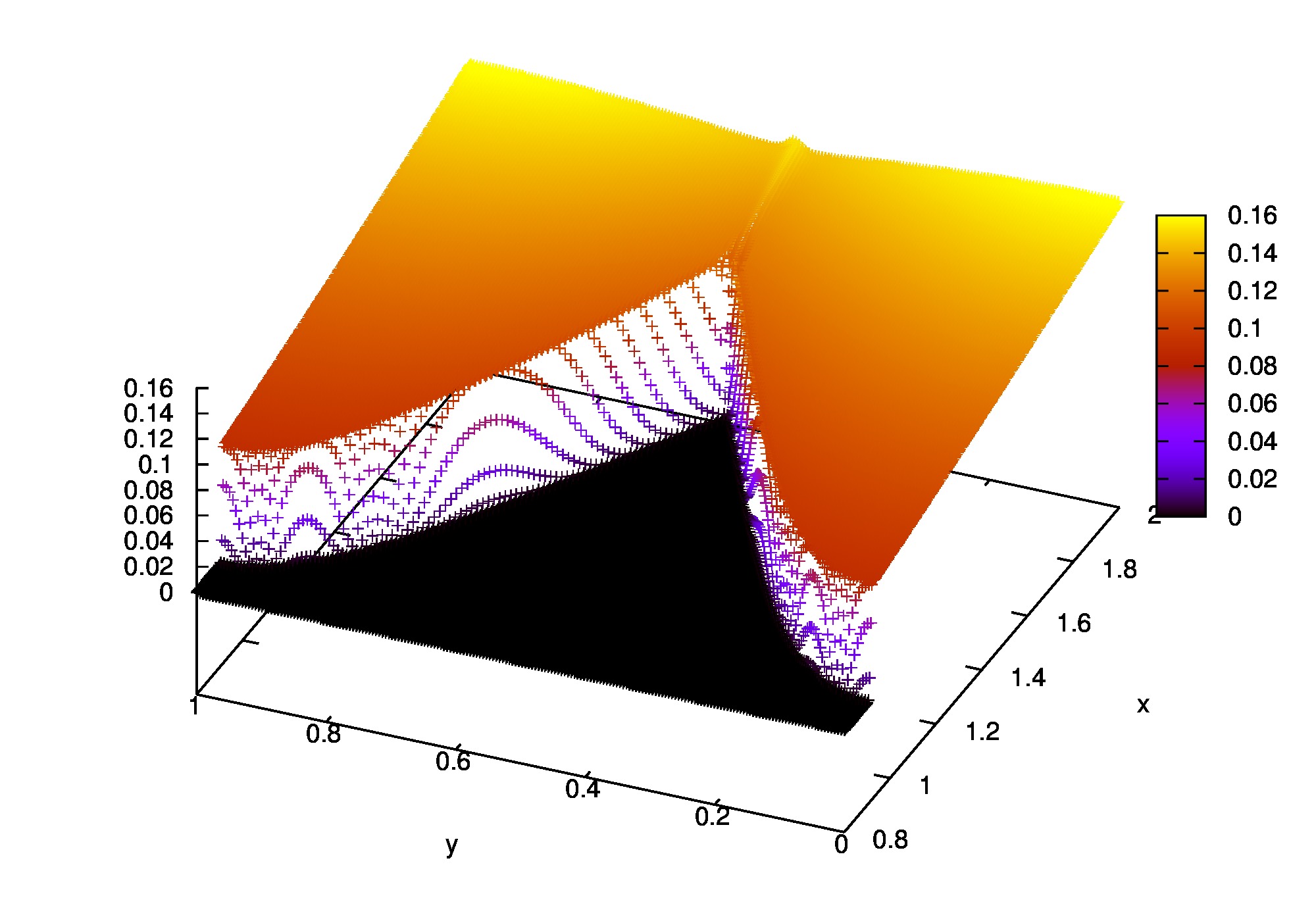}
\end{tabular}
\end{center}
\caption{Numerical validation of hypothesis \ref{hyp:H2}. To the left: $\left.\partial_{x}p\right|_{\Gamma_\eps}$ as a function of $y$ at the $\eps$-levelset, plotted for several values of $\eps$ between $5\cdot 10^{-1}$ (in black) and $1\cdot 10^{-2}$ (in blue).
To the right: view of $\partial_xp$ as a function of $(x,y)$ close to the free boundary. The parameters are as in Figure \ref{fig:H1}.}
\label{fig:H2}
\end{figure}
For practical reasons and since $p$ grows linearly in $x$ across the free boundary, $\eps>0$ cannot be chosen too small compared to the mesh size $\rd x,\rd y$ in order for the $\eps$-levelset to be consistent.
Indeed if $\eps$ is too small the numerical levelset $\Gamma_{\eps}=\{x=X_\eps(y)\}\leftrightarrow \{i=I_{\eps}(j)\}$ is eventually detected by \eqref{eq:approx_eps_levelsets} inside the $\mathcal O(\rd x)$ numerical boundary layer surrounding the actual free boundary, and the spatial derivatives become inaccurate.
This is apparent in Figures~\ref{fig:H1} and \ref{fig:H2} with $\rd x=\rd y=5.e^{-3}$: numerical oscillations start to develop for the smallest tested value $\eps=1.e^{-2}$.
%
%%%%%%%%%%%%%%%%%%%%%%%%%%%%%%%%%%%%%%%%%%%%%%%%%%%%%%%%%%%%%%%%%%%%%%%%%%%%%%%%%%%%%%%%%%%%%%%%%%%%
%
\subsection{Existence of corners}
\label{subsec:exist_corners}
According to Theorem \ref{theo:regularity}, the interface parametrization $\mathcal I(y)$ is semi-concave as a periodic solution to the Hamilton-Jacobi equation \eqref{eq:HJ_0}, of the form
$$
|\nabla_y\mathcal I|^2=g(y)
\qquad \text{in }\mathbb T^{d-1}.
$$
Roughly speaking, semi-concavity means that $\mathcal I$ has only smooth ($\mathcal{C}^1$) minimum points, but that corner-shaped maximum points are allowed.
This semiconcavity comes from the minus sign in the diffusive term $-\eps\frac{m}{\partial_x p} \D_{y}X_{\eps}$ for the vanishing viscosity approximation \eqref{eq:HJ_eps}, or equivalently from the fact that in the limit $\eps=0$ we are solving solving $+|\nabla_y \mathcal{I}|^2=g$ in the viscosity sense and not $-|\nabla_y \mathcal{I}|^2=-g$ (which in general is not equivalent, see e.g. \cite{CrandallIshiiLions-userguide,Barles}).

It is well known \cite{Barles-UniquenessHJ1,Barles,Roquejoffre-PropQualHJ} that the uniqueness and regularity of such periodic solutions strongly depend on the number of zeros of the right-hand side $g(y)$ in the torus.
A necessary condition for classical $\mathcal{C}^1$ solutions to exist is that $g(y)$ should vanish at least twice, in which case uniqueness fails.
If now $g(y)$ vanishes only once, any solution to the Hamilton-Jacobi equation has nonsmooth semiconcave maximum points (in agreement with Theorem~\ref{theo:regularity}), and the free boundary $\{x=\mathcal I(y)\}$ should accordingly develop corners pointing in the $x>0$ direction .
Indeed in this case $\nabla_y \mathcal{I}$ can vanish only once at a minimum point (the unique zero $g(y_0)=0$), but $I$ has at least one maximum point where the derivative fails to exist and the equation cannot be satisfied pointwise (but in the viscosity sense).
As a consequence we expect minimum points to be regular, whereas maximum points may be generically corner shaped.

Strangely enough, our simulations indicate that the interface has corners only for diffusion exponents $m\in(0,1)$.
For $m>1$ we could never observe corners, and the interfaces always looked like regular $\mathcal{C}^1$ graphs.
This is illustrated in Figure~\ref{fig:corners_not_corners}, where the same computations are compared for $m<1$ and $m>1$: corners clearly appear in the first case, whereas the free boundaries look smooth in the latter case.
Our computations also suggest that $m=1$ is a sharp threshold, at least in dimension $d=2$: corners systematically developed for all tested values up to $m<0.99$, and completely disappeared as early as $m\geq 1.01$.
We do not have any clue for this phenomenon and cannot explain the threshold so far.
\begin{figure}[h!]
\begin{center}
\begin{tabular}{cc}
\includegraphics[width=7cm]{./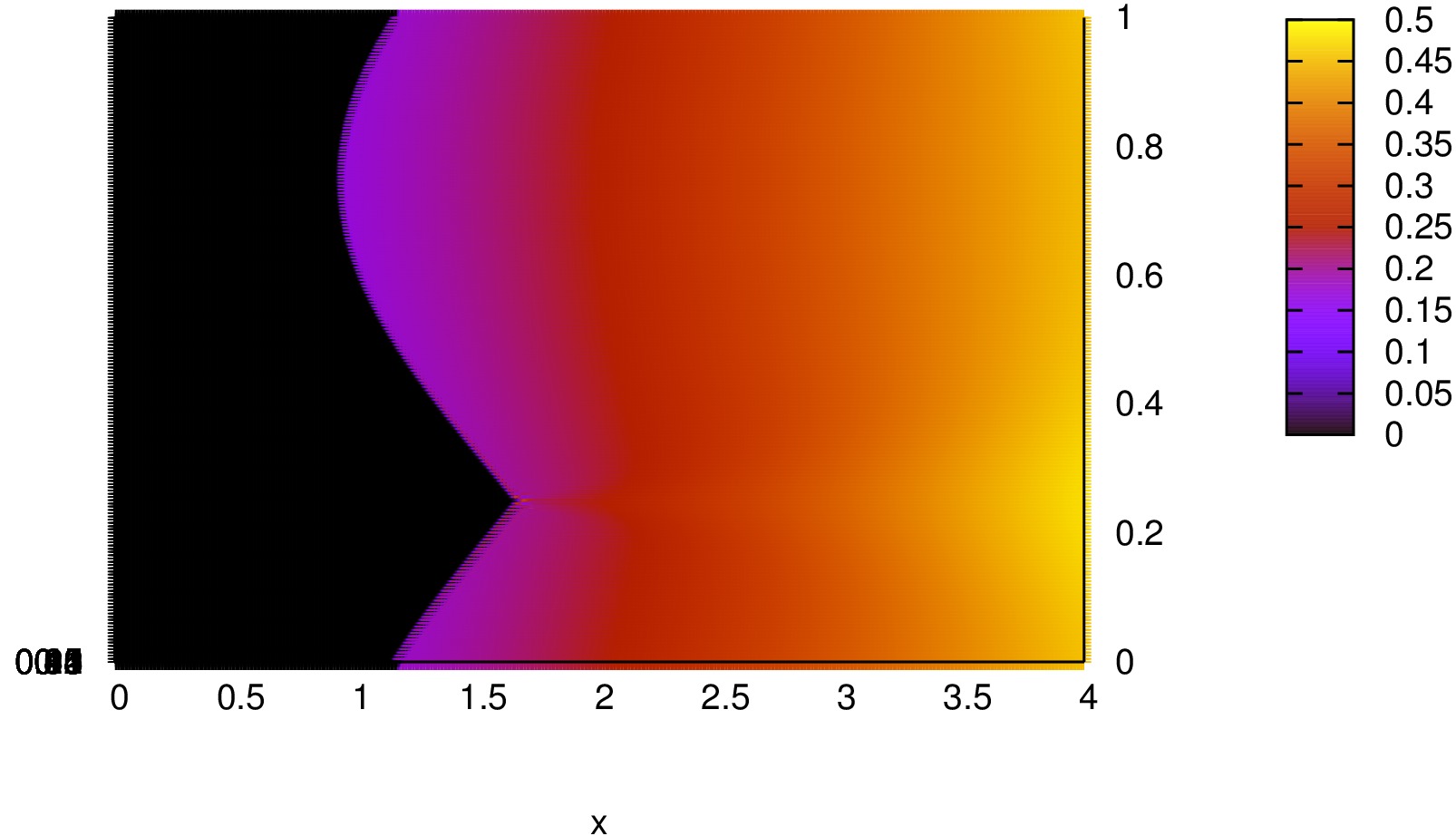} &
\includegraphics[width=7cm]{./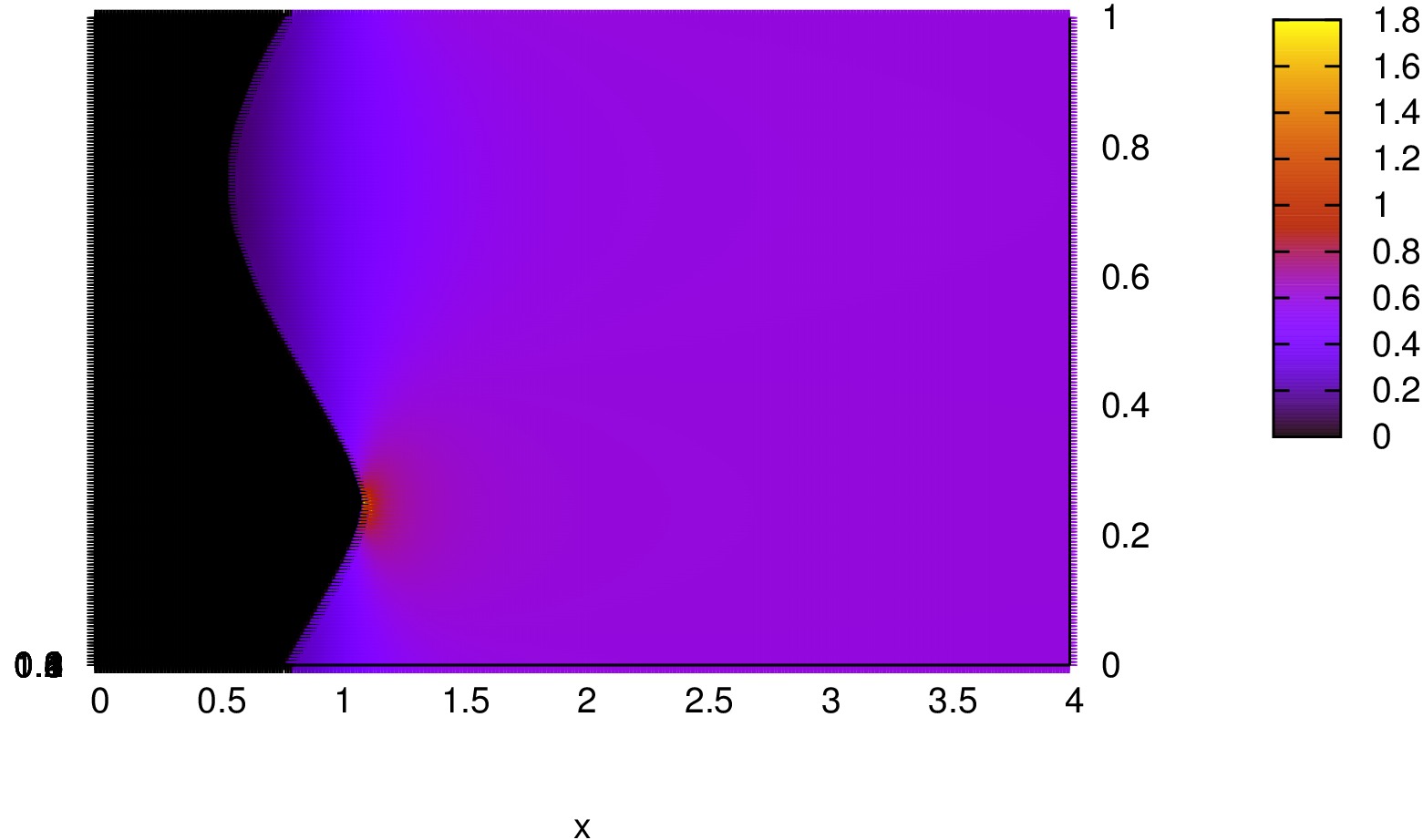}\\
\includegraphics[width=7cm]{./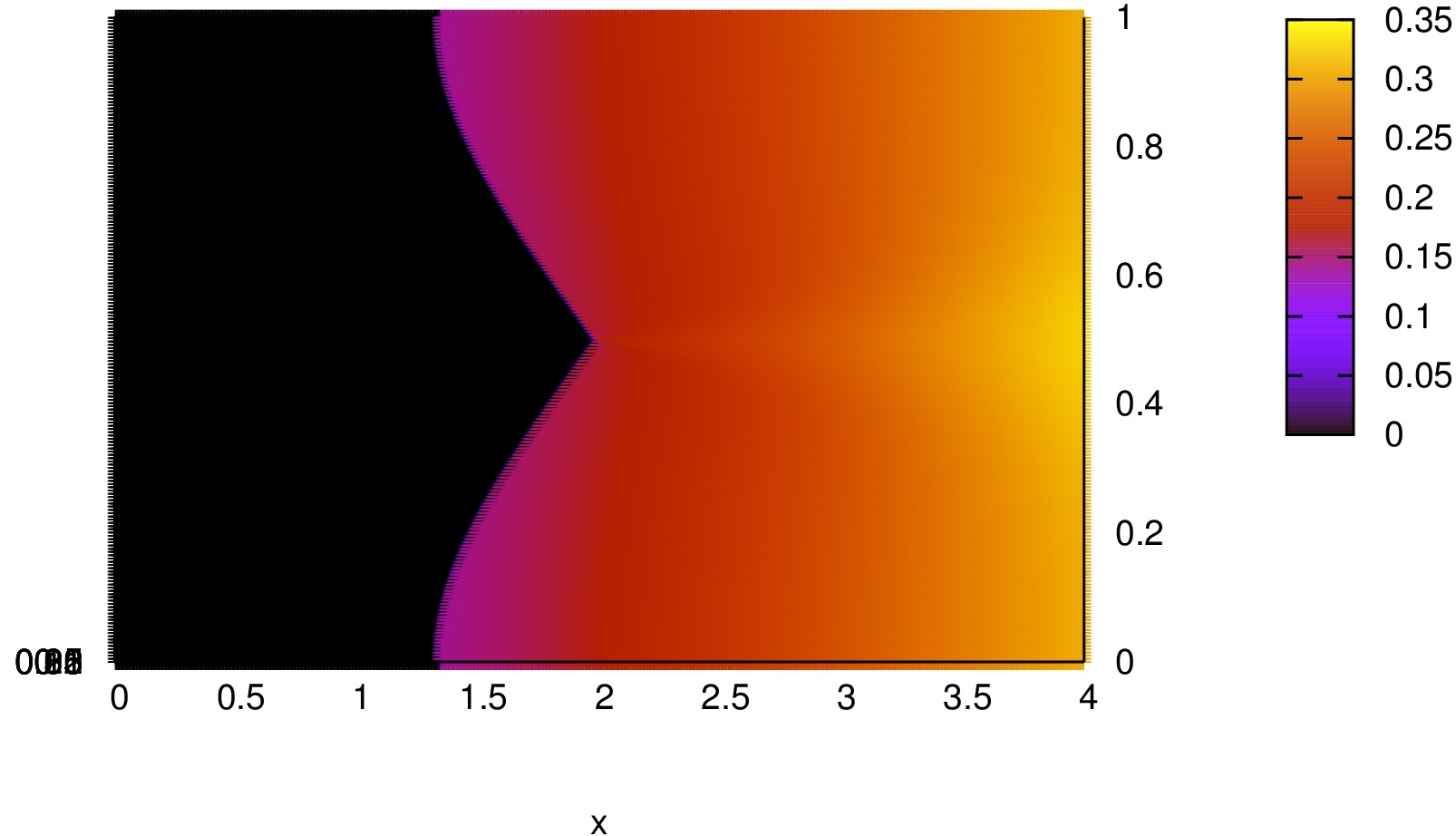} &
\includegraphics[width=7cm]{./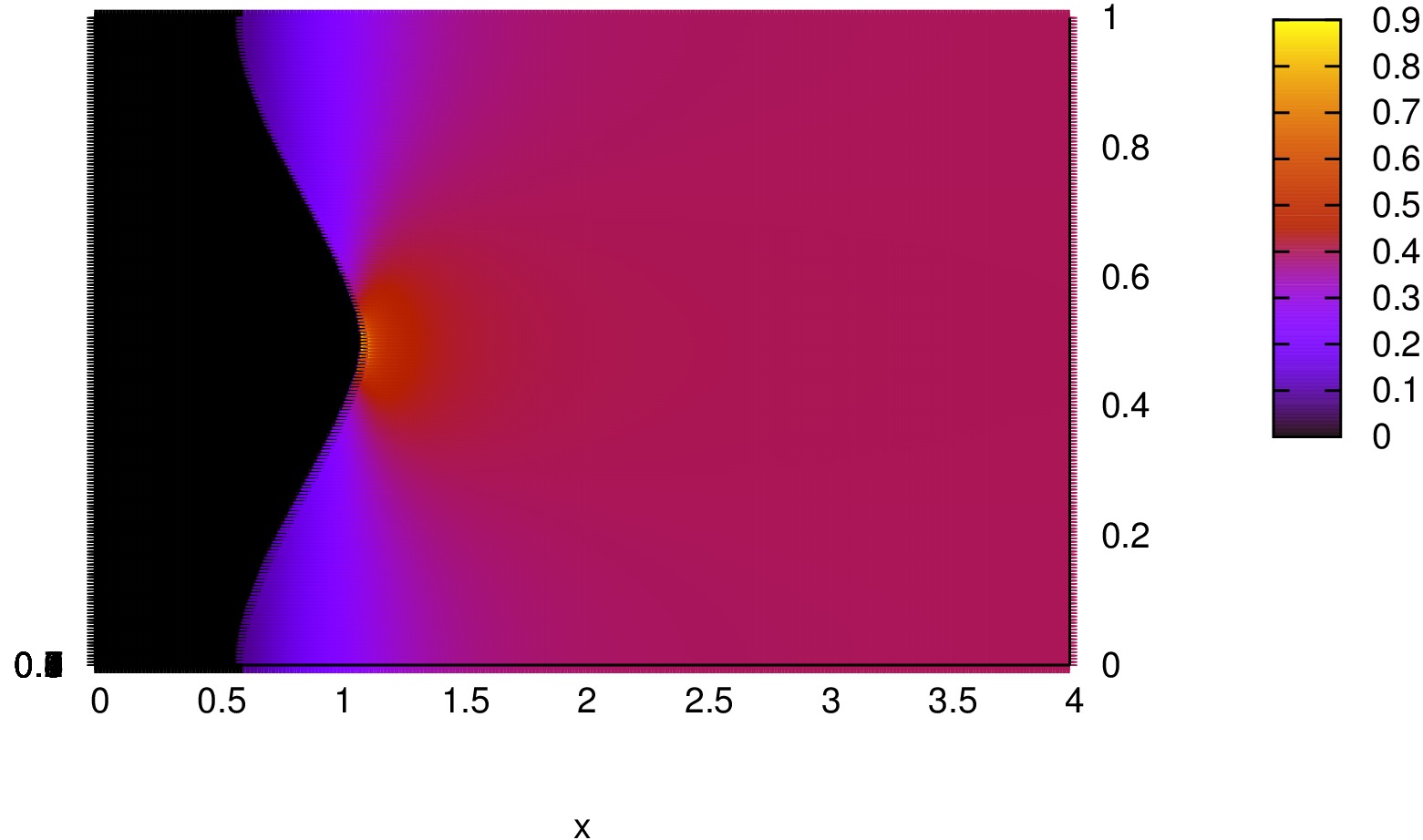}\\
\includegraphics[width=7cm]{./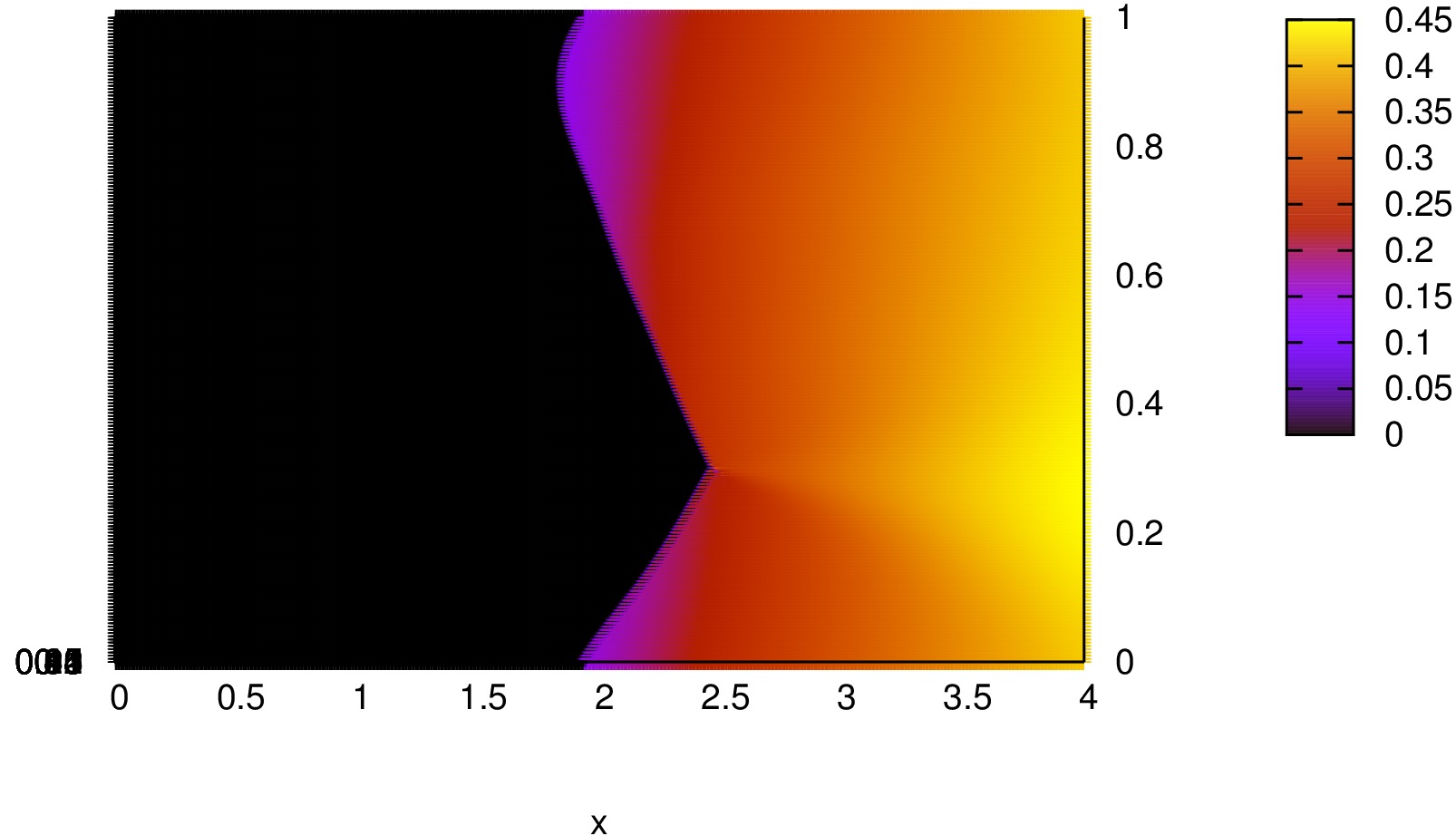} &
\includegraphics[width=7cm]{./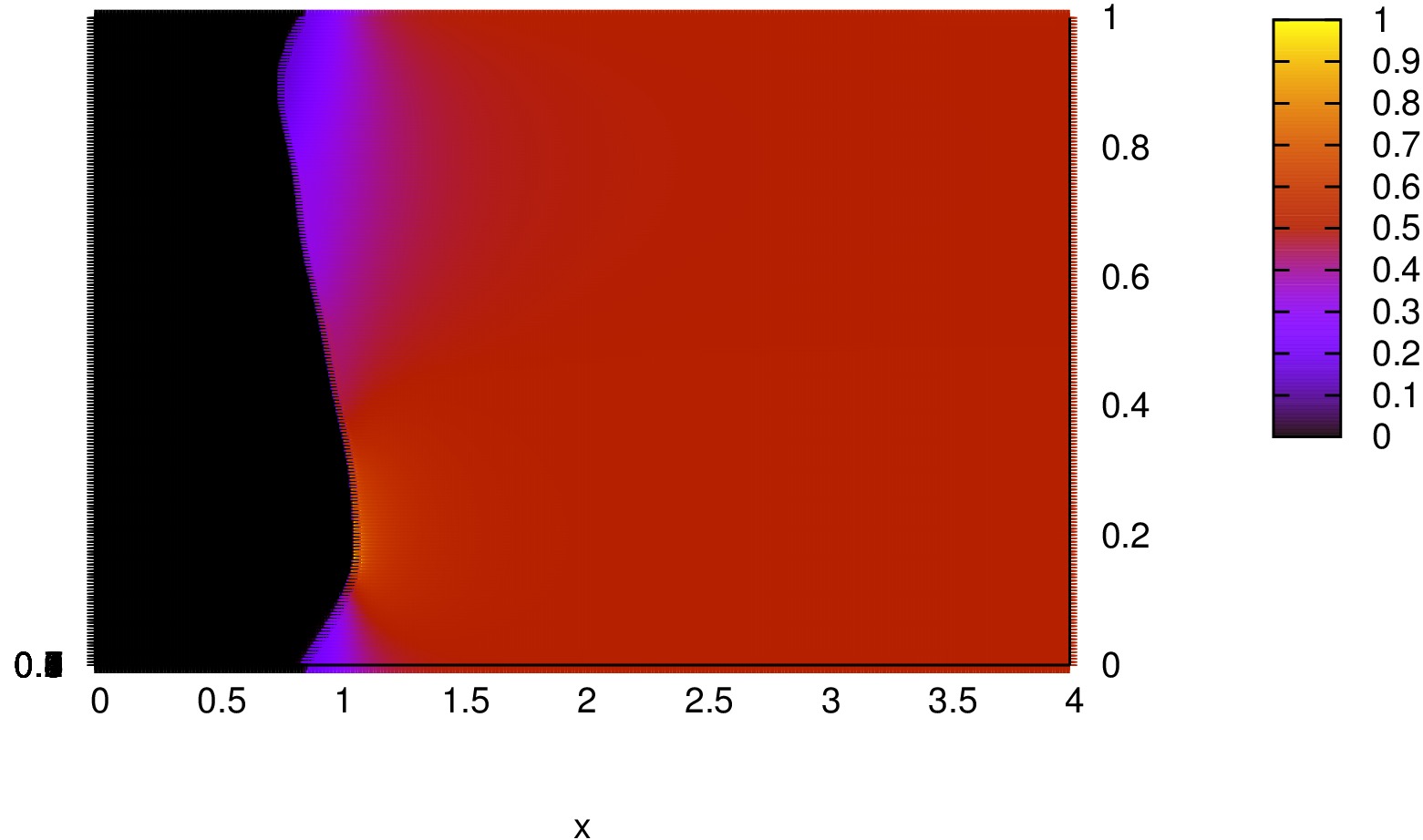}
\end{tabular}
\end{center}
\caption{View from top of $\partial_xp$ at time $t=30$ for $\alpha_1(y)$ with $c=0.6$ (top), $\alpha_2(y)$ with $c=0.5$ (middle), and $\alpha_3(y)$ with $c=0.4$ (bottom). The conductivity exponent is $m=0.1$ to the left, $m=1.1$ to the right, $\rd x=\rd y=5.e^{-3}$.}
\label{fig:corners_not_corners}
\end{figure}

Deciding qualitatively from numerical computations whether corners appear or not is a delicate matter, due to the intrinsic subjectivity of any graphical representation: what \emph{appears} to be corners in the plots of Figure~\ref{fig:corners_not_corners} (left column) may actually reveal to be smooth when zooming on the supposedly Lipschitz tips.
This does not seem to be the case, but the zooming possibilities are of course limited by the resolution and accuracy of the computations, hence this criterion is not very satisfactory.

Let us recall at this stage that \eqref{eq:HJ_0} was established in Theorem~\ref{theo:regularity} under the assumptions \eqref{hyp:H1}\eqref{hyp:H2}, which were numerically validated in section~\ref{subsection:num_validation_H1H2}.
As a consequence we can safely rely on the Hamilton-Jacobi scenario just discussed to push further the analysis: the forcing term $g(y)=\frac{c+\alpha(y)}{\left.\partial_xp\right|_{\Gamma^+}(y)}-1$ in $|\nabla_y \mathcal I |^2=g(y)$ can be evaluated numerically using Algorithm~\ref{algo:FB_detection_pxx} to compute (an approximation of) $\left.\partial_x p\right|_{\Gamma^+}$, and if $g$ does not vanish at a corner-\emph{looking} point $y_*\in \T$ then the free boundary $\Gamma=\{x=\mathcal I(y)\}$ \emph{must} have a Lipschitz corner there.
This is illustrated in Figure~\ref{fig:validation_HJ} in a test case: the forcing term is non-negative (as it should be, since it must equal $|\nabla_y \mathcal I |^2\geq 0$), vanishes only once at the $\mathcal{C}^1$ minimum $y\approx 0\equiv 1$ of $I$, and is clearly bounded away from zero in a $y$-neighborhood of the semiconcave corner at $y_*\approx 0.5$.
\begin{figure}[h!]
\begin{center}
\begin{tabular}{cc}
\includegraphics[width=7cm]{./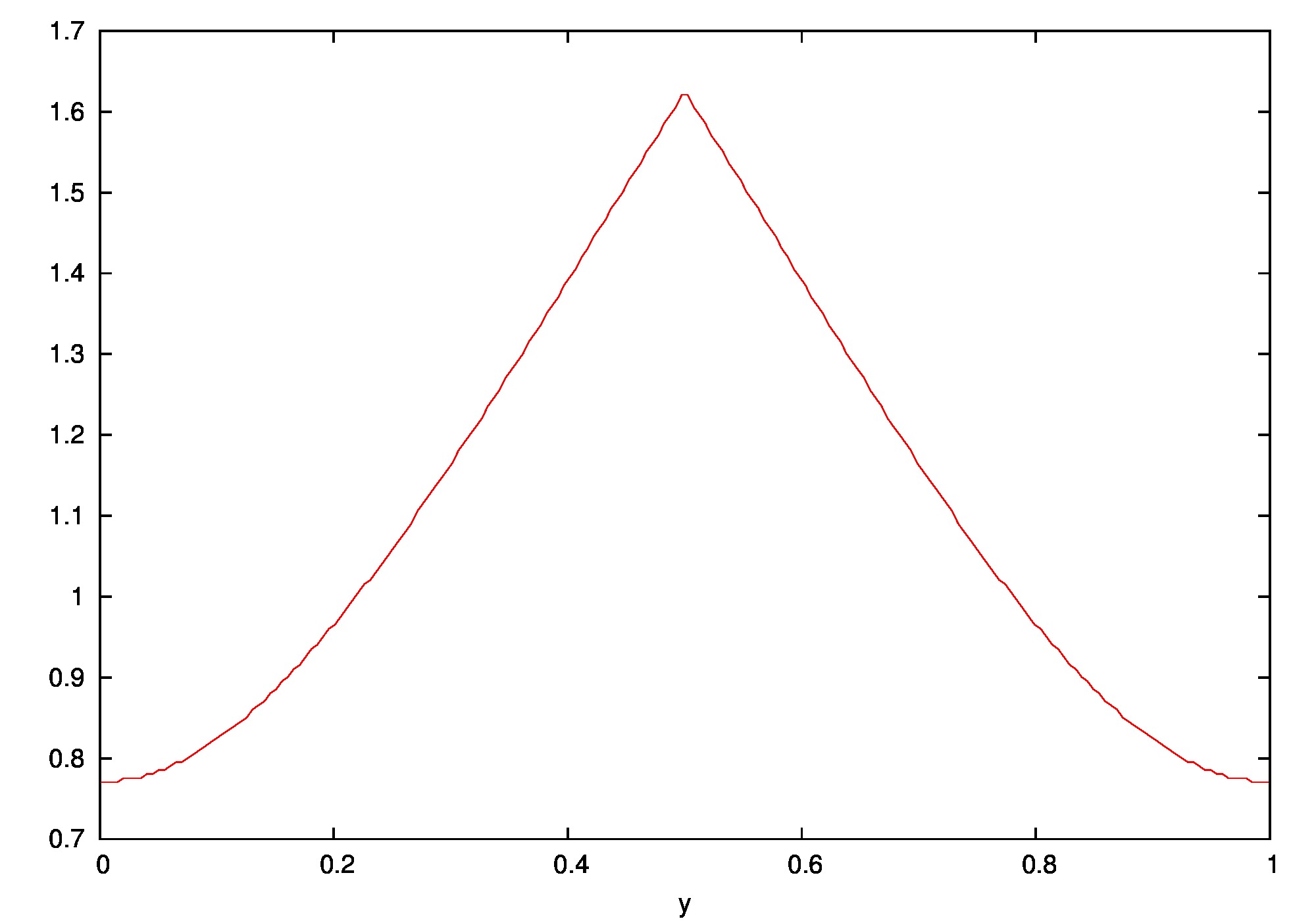}&
\includegraphics[width=7cm]{./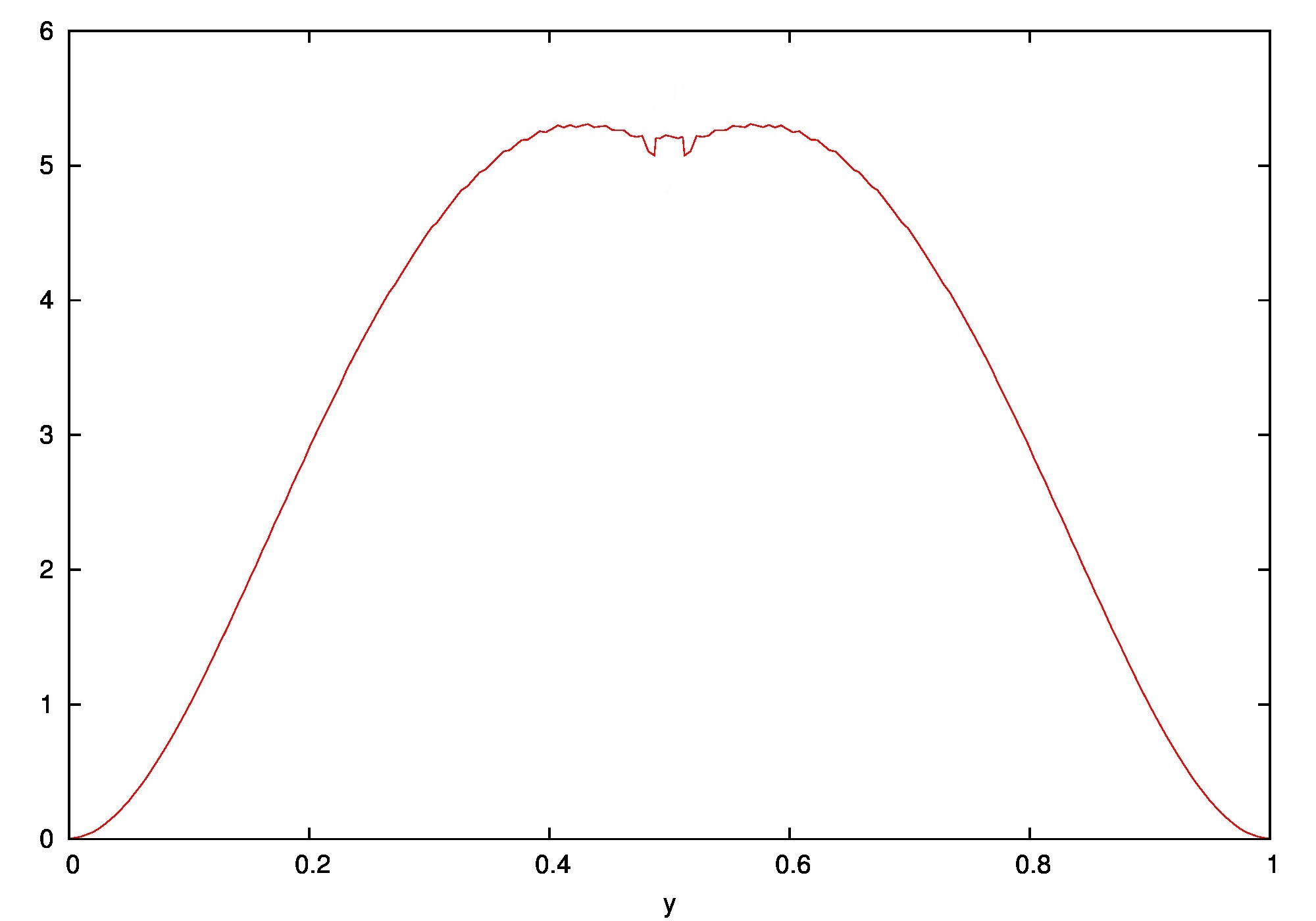}
\end{tabular}
\end{center}
\caption{Numerical validation of the corners. Interface parametrization $\mathcal I(y)$ to the left and forcing term $g(y)=\frac{c+\alpha(y)}{\left.\partial_xp\right|_{\Gamma^+}(y)}-1$ of \eqref{eq:HJ_0} to the right, plotted as functions of $y\in\T$.
The time $t=30$ is large enough so that the Cauchy solution has converged exponentially fast to the stationary wave profile. Parameters $m=0.1$, $\alpha(y)=\alpha_2(y)$, $c=0.4$, and $\rd x=\rd y=5.e^{-3}$.}
\label{fig:validation_HJ}
\end{figure}
%
%%%%%%%%%%%%%%%%%%%%%%%%%%%%%%%%%%%%%%%%%%%%%%%%%%%%%%%%%%%%%%%%%%%%%%%%%%%%%%%%%%%%%%%%%%%%%%%%%%%%
%
\subsection*{Acknowledgments}
The author wishes to thank Rapha\"el Loub\`ere for his help on the practical implementation, as well as Alexei Novikov and Jean-Michel Roquejoffre for fruitful discussions.
This work was supported by the French ANR project \emph{PREFERRED}, the Portuguese National Science Foundation through fellowship SFRH/
BPD/88207/2012, and the UT Austin/Portugal CoLab program \emph{Phase Transitions and Free Boundary Problems}
%
%

% \newpage
%\bibliographystyle{alpha}
\bibliographystyle{siam}
\bibliography{./biblio}

\begin{thebibliography}{10}

\bibitem{alt1984variational}
{\sc H.~W. Alt, L.~A. Caffarelli, and A.~Friedman}, {\em Variational problems
  with two phases and their free boundaries}, Transactions of the American
  Mathematical Society, 282 (1984), pp.~431--461.

\bibitem{AronsonBenilan-regularite}
{\sc D.~G. Aronson and P.~B{\'e}nilan}, {\em R{\'e}gularit{\'e} des solutions
  de l'{\'e}quation des milieux poreux dans {${\mathbb R}^{N}$}}, C. R. Acad.
  Sci. Paris S{\'e}r. A-B, 288 (1979), pp.~A103--A105.

\bibitem{AronsonCaff-initialtrace}
{\sc D.~G. Aronson and L.~A. Caffarelli}, {\em The initial trace of a solution
  of the porous medium equation}, Trans. Amer. Math. Soc., 280 (1983),
  pp.~351--366.

\bibitem{Barles-UniquenessHJ1}
{\sc G.~Barles}, {\em Uniqueness and regularity results for first-order
  {H}amilton-{J}acobi equations}, Indiana Univ. Math. J., 39 (1990),
  pp.~443--466.

\bibitem{Barles}
{\sc G.~Barles}, {\em Solutions de viscosit{\'e} des {\'e}quations de
  Hamilton--Jacobi}, Springer--Verlag, Berlin, 1994.

\bibitem{BS91}
{\sc G.~Barles and P.~E. Souganidis}, {\em Convergence of approximation schemes
  for fully nonlinear second order equations}, Asymptotic analysis, 4 (1991),
  pp.~271--283.

\bibitem{BenilanPierre-solutionsPMERn}
{\sc P.~B{\'e}nilan, M.~G. Crandall, and M.~Pierre}, {\em Solutions of the
  porous medium equation in {${\mathbb R}^{N}$} under optimal conditions on
  initial values}, Indiana Univ. Math. J., 33 (1984), pp.~51--87.

\bibitem{caffarelli2005geometric}
{\sc L.~Caffarelli and S.~Salsa}, {\em A Geometric Approach to Free Boundary
  Problems}, Graduate studies in mathematics, American Mathematical Society,
  2005.

\bibitem{CaffVazq-viscPME}
{\sc L.~Caffarelli and J.~L. Vazquez}, {\em Viscosity solutions for the porous
  medium equation}, in Differential equations: {L}a {P}ietra 1996 ({F}lorence),
  vol.~65 of Proc. Sympos. Pure Math., Amer. Math. Soc., Providence, RI, 1999,
  pp.~13--26.

\bibitem{CaffFried-regularity}
{\sc L.~A. Caffarelli and A.~Friedman}, {\em Regularity of the free boundary of
  a gas flow in an {$n$}-dimensional porous medium}, Indiana Univ. Math. J., 29
  (1980), pp.~361--391.

\bibitem{CaffarelliVazquezWolanski-lipschitzPME}
{\sc L.~A. Caffarelli, J.~L. V{\'a}zquez, and N.~I. Wolanski}, {\em Lipschitz
  continuity of solutions and interfaces of the {$N$}-dimensional porous medium
  equation}, Indiana Univ. Math. J., 36 (1987), pp.~373--401.

\bibitem{CaffWol-C1alpha}
{\sc L.~A. Caffarelli and N.~I. Wolanski}, {\em {$C^{1,\alpha}$} regularity of
  the free boundary for the {$N$}-dimensional porous media equation}, Comm.
  Pure Appl. Math., 43 (1990), pp.~885--902.

\bibitem{AlmarchaClavinDucheminSanz-ablativeRT}
{\sc P.~Clavin, C.~Almarcha, L.~Duchemin, and J.~Sanz}, {\em Ablative
  rayleigh-taylor instability for strong temperature dependence of thermal
  conductivity}, Journal of Fluid Mechanics, 579 (2007), pp.~481--492.

\bibitem{CrandallIshiiLions-userguide}
{\sc M.~G. Crandall, H.~Ishii, and P.-L. Lions}, {\em User's guide to viscosity
  solutions of second order partial differential equations}, Bull. Amer. Math.
  Soc. (N.S.), 27 (1992), pp.~1--67.

\bibitem{CL96}
{\sc M.~G. Crandall and P.-L. Lions}, {\em Convergent difference schemes for
  nonlinear parabolic equations and mean curvature motion}, Numerische
  Mathematik, 75 (1996), pp.~17--41.

\bibitem{dahlberg1984non}
{\sc B.~E. Dahlberg and C.~E. Kenig}, {\em Non-negative solutions of porous
  medium eqation}, Communications in partial differential equations, 9 (1984),
  pp.~409--437.

\bibitem{daskalopoulos2001all}
{\sc P.~Daskalopoulos, R.~Hamilton, K.-A. Lee, et~al.}, {\em All time
  $\mathcal{C}^{\infty}$ regularity of the interface in degenerate diffusion: a
  geometric approach}, Duke Mathematical Journal, 108 (2001), pp.~295--327.

\bibitem{DK07}
{\sc P.~Daskalopoulos and C.~E. Kenig}, {\em Degenerate diffusions: Initial
  value problems and local regularity theory}, vol.~1, European Mathematical
  Society, 2007.

\bibitem{daskalopoulos2003free}
{\sc P.~Daskalopoulos and E.~Rhee}, {\em Free-boundary regularity for
  generalized porous medium equations}, Commun. Pure Appl. Anal, 2 (2003),
  pp.~481--494.

\bibitem{JensenSouganidis-regularity}
{\sc R.~Jensen and P.~E. Souganidis}, {\em A regularity result for viscosity
  solutions of {H}amilton-{J}acobi equations in one space dimension}, Trans.
  Amer. Math. Soc., 301 (1987), pp.~137--147.

\bibitem{Kim}
{\sc I.~C. Kim and H.~K. Lei}, {\em Degenerate diffusion with a drift
  potential: a viscosity solutions approach}, Discrete Contin. Dyn. Syst., 27
  (2010), pp.~767--786.

\bibitem{koch1998non}
{\sc H.~Koch}, {\em Non-Euclidean singular integrals and the porous medium
  equation}, PhD thesis, 1998.

\bibitem{ClavinMasse-instability}
{\sc L.~Masse and P.~Clavin}, {\em Instabilities of ablation fronts in inertial
  confinement fusion: A comparison with flames}, Physics of Plasmas, 11 (2004).

\bibitem{MonsaingonNovikovRoquejoffre}
{\sc L.~Monsaingeon, A.~Novikov, and J.-M. Roquejoffre}, {\em Traveling wave
  solutions of advection--diffusion equations with nonlinear diffusion},
  Annales de l'Institut Henri Poincare (C) Non Linear Analysis, 30 (2013),
  pp.~705--735.

\bibitem{Roquejoffre-PropQualHJ}
{\sc J.-M. Roquejoffre}, {\em Propri{\'e}t{\'e}s qualitatives des solutions des
  {\'e}quations de {H}amilton-{J}acobi (d'apr{\`e}s {A}. {F}athi, {A}.
  {S}iconolfi, {P}. {B}ernard)}, Ast{\'e}risque,  (2008), pp.~Exp. No. 975,
  viii, 269--293.
\newblock S{\'e}minaire Bourbaki. Vol. 2006/2007.

\bibitem{Vazq-PME}
{\sc J.~L. V{\'a}zquez}, {\em The porous medium equation}, Oxford Mathematical
  Monographs, The Clarendon Press Oxford University Press, Oxford, 2007.
\newblock Mathematical theory.

\bibitem{ZeldRaizer-physics}
{\sc Y.~B. Zel'dovich and Y.~Raizer}, {\em Physics of Shock Waves and
  High-temperature Hydrodynamics Phenomena}, Academic Press, New York, 1966.

\end{thebibliography}

\end{document}